\newcommand{\R}{\mathbb{R}}
\newcommand{\diag}{\textup{diag}}
\newcommand{\M}{\mathcal{M}}
\def \T{\textup{T}}
\def \d{\,\textup{d}}
\def \G{\mathcal{G}}
\def \Veq{V_{\textup{eq}}}
\def \D{\textup{D}}
\def \indikator{\mathbbm{1}}
\def \zeros{\mathbb{O}}
\def \P{\mathcal{P}}
\def \u{\mathbf{u}}
\def \f{\mathbf{f}}
\def \S{\widehat{\mathbf{S}}}
\def \hesitation{h}
\def \hesitationhat{\hat{h}}
\def \hesitationJacobi{\widehat{h'}}
\def \DVeq{{\mathcal{D}_{\Veq'}}}
\def \Dh{{\mathcal{D}_{h'}}}
\def \veco{\vec{0}}
\def \uhat{\mathbf{\hat{u}}}
\def \fhat{\mathbf{\hat{f}}}
\def \velocityhat{{\widehat{w}}}
\def \velocity{{w}}
\def \vI{\widehat{v^{(1)}}}
\def \Deq{{\widehat{\lambda_{\textup{eq}}}}}
\def \NumFlux{\boldsymbol{\widehat{F}}} 
\def \caStoConservedLeft{\boldsymbol{\bar{u}_\ell}}
\def \caStoConservedRight{\boldsymbol{\bar{u}_r}}
\def \caStoConserved{\boldsymbol{\bar{u}}}
\newtheorem{theorem}{Theorem}[section]
\newtheorem{corollary}[theorem]{Corollary}
\theoremstyle{remark}
\begin{document}

\title{Stability analysis of a hyperbolic stochastic Galerkin formulation for  the Aw-Rascle-Zhang model with relaxation}

\author[]{
	Stephan Gerster, 
	Michael Herty,
	Elisa Iacomini
}
\affil[]{ \em \small Institut für Geometrie und Praktische Mathematik (IGPM),\\ \em RWTH Aachen University, \\ \em Templergraben 55, 52062 Aachen, Germany}

\date{}                     

\maketitle

\begin{abstract}

We investigate the propagation of uncertainties in the Aw-Rascle-Zhang model, which belongs to a class of second order traffic flow models described by a system of nonlinear hyperbolic equations.  The stochastic quantities   
are expanded in terms of wavelet-based series expansions. 
Then, they are projected to obtain a deterministic system for the coefficients in the truncated series. 
Stochastic Galerkin formulations are presented in conservative form and for smooth solutions also in the corresponding non-conservative form. 
This allows to obtain stabilization results, when the system is relaxed to a first-order model. 
Computational tests illustrate the theoretical
results. 

\end{abstract}

{\bf Keywords.} {
	Traffic flow, 
	uncertainty quantification, 
	stability analysis, 
	Aw-Rascle-Zhang model,  
	stochastic Galerkin, 
	Chapman-Enskog expansion
}

\section{Introduction}

Nowadays traffic models have become an indispensable tool in the urban and extraurban management of vehicular traffic.
Understanding and developing an optimal transport network, with efficient
movement of traffic and minimal traffic congestions, will have a great socio-economical impact on the society, in particular in pandemics situations. 

{
Besides guaranteeing optimal transport in the presence of pandemic situations, there is a second major aspect, where 
our work on traffic flow modelling may contribute. It is clear that in a pandemic situation the spreading of possible infections correlates with the number of contacts as e.g.~modelled in SIR dynamics~\cite{Kermack1991,Noble1974}. Traffic flow provides valuable information on possible contacts and on possible points of high population density in urban and extraurban areas. The prediction of the flow into and from those areas can help to calibrate the transmission coefficients in typical SIR models for disease propagation. Here, however, deterministic predictions are of little to no use in an a priori assessment of possible critical points of high traffic density. Therefore, it is mandatory to expand the current theory on macroscopic deterministic traffic flow models towards realistic but uncertain models. The current paper precisely tackles this point.}

A vast amount of literature about vehicular traffic modeling has flourished in the last decades. Nevertheless, there are still several limitations for obtaining trustful traffic forecasts. This is possibly due to the fact that the evolution of traffic is described by highly nonlinear dynamics that is also exposed to the presence of various sources and types of uncertainties~\cite{Guiseppe2016,Mattia2020,Tosin2018,Tosin2021}. For example, the uncertainty may stem from real data affected by errors in the measurements or the reaction time of drivers. A pandemic scenario adds additional uncertainties, but needs reliable estimates. 
{
In particular, in view of the discussion of possible measures to reduce traffic and accumulation in certain areas, the reliable and quantifiable prediction is of high importance. The approach presented in this paper allows to quantify the complete statistics of the  uncertain solution and hence it also allows to compute e.g.~rare events.}
Quantifying the propagation of uncertainty in nonlinear models is therefore of interest and the purpose of this paper.
 
Uncertainty quantification in the sense used here is concerned with the propagation of input uncertainty through traffic models. 
Several approaches are presented in the literature and can be classified in non-intrusive and intrusive methods. The main idea underlying the former approach is to solve the model for fixed number of samples using deterministic numerical algorithms. Then, the statistics of the quantities of interest are determined by numerical quadrature. Typical examples are Monte-Carlo and stochastic collocation methods \cite{S4}. 

In contrast, we consider the intrusive stochastic Galerkin method. Here,
stochastic processes are represented as piecewise orthogonal functions, for instance Legendre polynomials or multiwavelets.
These representations are known as generalized polynomial chaos~(gPC) expansions~\cite{S2,S21,Gottlieb2001,S1,S3}. 
Expansions of the stochastic input are substituted into the governing equations and a Galerkin projection is used to obtain deterministic evolution equations for the coefficients of the series expansions.

Results for nonlinear hyperbolic systems are only partial, 
since desired properties like hyperbolicity are not necessarily transferred to the intrusive formulation~\cite{H0,JinShu2018}.  A problem is posed by the fact that the deterministic Jacobian of the projected system differs from the random Jacobian of the original system. We refer the interested reader to~\cite{FettesPaper} for examples of the Euler as well as shallow water equations. Furthermore, it is remarked in~\cite[Sec.~5]{S5} 
that simulations for Euler equations  may break down for high Mach numbers unless auxiliary variables and wavelet-based expansions are used. 

Still, stochastic Galerkin methods applied to hyperbolic equations is an active field of research. {
Those can be successfully applied to scalar conservation laws,  since the resulting Jacobian is symmetric. 
In the scalar case,  well-balanced schemes have been developed~\cite{H5} and a maximum-principle can be ensured~\cite{kusch2019maximum}.}

Furthermore, entropy-entropy flux pairs and hence hyperbolicity can be transferred to a stochastic Galerkin formulation by introducing auxiliary variables~\cite{H0}, which require \emph{expensive} variable transforms. 
Although there are many attempts to make the transform  more efficient and stable~\cite{kusch2020filtered,kusch2020intrusive}, the computational cost remain a drawback of this approach. 
To this end, an expansion in Roe variables has been proposed~\cite{S5}.  Since it exploits quadratic relationships, the necessary transforms are numerically cheap and stable. These auxiliary variables enable also a hyperbolic stochastic Galerkin formulation for isothermal Euler equations for arbitrary gPC expansions.  {Moreover, it has been observed that the shallow water equations  allow for a hyperbolic stochastic Galerkin formulation which neither requires  auxiliary variables nor any transform~\cite{dai2020hyperbolicity}.}

Additional results are available for certain wavelet-based gPC expansions, including the Wiener-Haar basis and piecewise linear multiwavelets~\cite{LEMAITRE2004,S5}.  
These wavelet expansions are motivated by a robust expansion for solutions that depend on the stochastic input in a non-smooth way and are used for stochastic multiresolution as well as adaptivity in the stochastic space~\cite{SI1,Kroeker2015,Tryoen2012}.

In this paper, we consider hyperbolic systems used in vehicular traffic modeling, namely second order macroscopic models \cite{aw2000SIAP, zhang2002non}. The main feature is that they take into account the non-equilibria states, assuming that accelerations are not instantaneous. 
They are able to recover typical traffic phenomena as generating capacity drop, hysteresis, relaxation, platoon diffusion, or spontaneous congestions like stop-and-go waves~\cite{Greenberg2004,Rosales2009,Guiseppe2021}.

The first results in this direction were proposed by Payne and Whitham~\cite{whitham1974linear} taking into account that the speed of each car does not change instantaneously. However, their model has the drawback that the driver's decision is influenced by the road conditions behind. A second order model is due to Aw, Rascle ~\cite{aw2000SIAP} and Zhang ~\cite{zhang2002non}. By taking into account the differences between traffic and fluid flows, they designed models  to simulate the anisotropic traffic behaviour. 

The inhomogeneous Aw-Rascle-Zhang (ARZ) model 
includes a relaxation term that allows drivers 
to achieve the equilibrium speed~\cite{Greemberg2002}. 
In the small relaxation limit the ARZ model approaches to the Lighthill-Whitham-Richards (LWR) model \cite{LWR,richards1956OR}, which can be obtained by means of a  Chapman-Enskog-type expansion. 
 Here, the stability and well-posedness of solutions to the hyperbolic ARZ model is governed by the study of the sign of the diffusion coefficient, which requires the so-called sub-characteristic condition~\cite{Chen1994,Jin1995}. 
The diffusion term vanishes in the zero-relaxation limit and  the LWR model is recovered~\cite{Seibold2013,herty2020bgk,Guiseppe2021}.

This paper analyzes stochastic Galerkin formulations for the  Aw-Rascle-Zhang model in conservative and non-conservative form. The non-conservative form allows to state eigenvalues  and hence ensures hyperbolicity. 
Furthermore, the stability of the system is investigated if it is relaxed to a first-order model. 
As basic tool we follow the approach in~\cite{Seibold2013,herty2020bgk,Guiseppe2021}  and study definiteness properties of the corresponding diffusion coefficient by using a Chapman-Enskog-type expansion. 

Section~\ref{sec:ARZ} introduces the deterministic 
Aw-Rascle-Zhang model in conservative and non-conservative form. 
Section~\ref{sec:state} presents stochastic Galerkin formulations. 
For a \emph{special class} of wavelet-based gPC expansions 
an auxiliary variable that \emph{does not} cause any computationally expensive transforms is introduced to ensure hyperbolicity.  
Section~\ref{sec:inhomo} is devoted to a 
stability analysis of the inhomogeneous ARZ model. 
The theoretical results are derived \emph{only for classical smooth solutions with deterministic relaxation}. 
Riemann problems to weak solutions with uncertainties in the relaxation parameter are illustrated numerically in Section~\ref{sec:num}.

\section{Second order traffic flow models with relaxation}\label{sec:ARZ}

Typical macroscopic traffic flow models describe the density $\rho=\rho(t,x)$ 
and the mean velocity ${v=v(t,x)}$ of vehicles at a location  $x\in \R$ and  time $t>0$.
The natural assumption that the total mass is conserved leads to impose that the density $\rho$ satisfies the continuity equation
\begin{equation}\label{eq:LWR}
\partial_t \rho + \partial_x(\rho v)=0
\quad\text{with initial values} \quad \rho(0,x)=\rho_0(x).
\end{equation}
In first-order models the velocity $v=v(\rho)$ is given as a function of the density alone, e.g. the LWR model \cite{LWR,richards1956OR}. 
Second-order models describe the velocity by an additional differential equation. In particular, we consider the \textbf{inhomogeneous Aw-Rascle-Zhang model}~\cite{aw2000SIAP,Greemberg2002} with relaxation
\begin{equation}\label{eq:arz_non_cons}
\begin{cases}
\begin{aligned}
\partial_t \rho + \partial_x(\rho v)&=0,\\
\partial_t \big(v+\hesitation(\rho)\big) + v \partial_x\big(v+\hesitation(\rho)\big)
&=
\frac{1}{\tau}
\big(\Veq(\rho)-v\big). 
\end{aligned}
\end{cases}
\end{equation}
Here,  $\hesitation(\rho):\R^+\to\R^+$ is called  hesitation or traffic pressure \cite{fan2013comparative}. It is a smooth, strictly increasing  function of the density. 
The relaxation term with parameter~${\tau>0}$ on the right hand side makes the drivers tend to a given equilibrium velocity~$\Veq(\rho)$. 
This is important, since the  homogeneous ARZ model without relaxation has no mechanism to
move drivers when initially are at rest. 
By introducing the variable $z=\rho\big(v+\hesitation(\rho)\big)$, 
the system~\eqref{eq:arz_non_cons} can be written in  conservative form as
\begin{equation}\label{eq:inhomo}
\begin{cases}
\begin{aligned}
\partial_t \rho + \partial_x\big(z-\rho \hesitation(\rho) \big)&=0,\\
\partial_t z + \partial_x\big( \nicefrac{z^2}{\rho}-z \hesitation(\rho) \big)
&=
\frac{\rho}{\tau}
\big(\Veq(\rho)-v(\rho,z)\big)
\end{aligned}
\end{cases}
\quad\text{for}\qquad
v(\rho,z)=\nicefrac{z}{\rho}-\hesitation(\rho). 
\end{equation}
Here, the velocity~$v(\rho,z)$ is a driver dependent property. The conservative formulation~\eqref{eq:inhomo} is abbreviated as 
\begin{align*}
&\partial_t \u + \partial_x \f(\u) = \frac{1}{\tau} \mathbf{S}(\u)
\quad \text{with unknowns} \quad
\u=\binom{\rho}{z}
\quad\text{and} \\
&\f(\u)
=
\binom{\f_\rho(\rho,z)}{\f_z(\rho,z)}
=
\binom{z-\rho \hesitation(\rho)}{
 \nicefrac{z^2}{\rho}-z \hesitation(\rho)
},\quad
\mathbf{S}(\u)
=
\binom{0}{\mathbf{S}_z(\rho,z)}
=
\binom{0}{
\rho\big( \Veq(\rho)-v(\rho,z) \big)
}.
\end{align*} 
The eigenvalues of the Jacobian~
\begin{equation}\label{DeterministicJacobian}
\D_{\u} \f(\u)
=
\Big(\partial_\alpha \f_\beta(\u) \Big)_{\alpha,\beta\in\{\rho,z\}}
=
\begin{pmatrix}
- \hesitation(\rho) -\rho \hesitation'(\rho) & 1 \\
 - \big( \frac{z}{\rho} \big)^2 - z \hesitation'(\rho)
 & 2 \frac{z}{\rho}  - \hesitation(\rho)
\end{pmatrix}
\end{equation}
 are $\lambda_1(\rho,z)=v(\rho,z)-\rho \hesitation'(\rho)$ and $\lambda_2(\rho,z)=v(\rho,z)$. 
Hence, the ARZ model is strictly hyperbolic under the assumption $\rho>0$. 
The (local)  equilibrium velocity~$\Veq(\rho)$ satisfies the scalar conservation law
\begin{equation}\label{equilbriumEquation}
\partial_t \rho
+
\partial_x 
\f_{\textup{eq}}(\rho)=0
\quad\text{for}\quad
\f_{\textup{eq}}(\rho) = \rho \Veq(\rho)
\quad\text{and}\quad
\f_{\textup{eq}}'(\rho) =  \Veq(\rho) + \rho \Veq'(\rho).
\end{equation}

\noindent
Stability requires that the full system propagates information faster than the local equilibrium, i.e.~the~\textbf{sub-characteristic condition}
\begin{equation}\label{SC}\tag{\textup{SC}}
\lambda_1 \Big( \rho,\rho\big( \Veq(\rho) +h(\rho) \big) \Big)
\leq
\f_{\textup{eq}}'(\rho)
\leq 
\lambda_2 \Big( \rho,\rho\big( \Veq(\rho) +h(\rho) \big) \Big)
\quad\text{with}\quad \Veq'(\rho)<0
\end{equation}
is satisfied. 
It is shown in~\cite[Th.~3.1]{Chen1994} for general 2$\times$2 systems that the sub-characteristic condition holds if and only if the 
 \textbf{first-order correction} 
$$
v = \Veq(\rho) + \tau v^{(1)}
+
\mathcal{O}\big(\tau^2\big)
$$
leads to a dissipative advection-diffusion equation. For the deterministic ARZ model~\cite{zhang2002non,herty2020bgk}, this reads as
\begin{equation}\label{DI}\tag{\textup{DI}}
\partial_t \rho 
+
\partial_x \f_{\textup{eq}}(\rho)
=
\tau\partial_x
\big(
\mu(\rho)
\partial_x \rho
\big)
\quad
\text{with diffusion coefficient}\quad
\mu(\rho)
\coloneqq
-\,
\rho^2  \Veq'(\rho) \big( \Veq'(\rho) + h'(\rho) \big).
\end{equation}

\noindent
In the sequel, we will extend these results to the stochastic case.

\section{Stochastic Galerkin formulation}\label{sec:state}

We extend the hyperbolic balance law~\eqref{eq:inhomo} to account for uncertainties that arise from random initial conditions. The hesitation function and the equilibrium velocity, however, remain given deterministic functions. 
Uncertainties are summarized in a random variable~$\xi$, 
defined on a probability space $\big(\Omega,\mathcal{F}(\Omega),\mathbb{P}\big)$,
 and propagated by the random system
\begin{equation}\label{eq:in_system}
    \partial_t \u(t,x,\xi) + \partial_x \f\big(\u(t,x,\xi)\big)=
    \frac{1}{\tau}
    \mathbf{S}\big(\u(t,x,\xi)\big). 
\end{equation}
{
For fixed time and space coordinates we expand the solution in terms of the~\textbf{generalized polynomial chaos (gPC) expansion}}
\begin{equation*}
\label{gPC} \tag{\textup{gPC}}
\G_K[\u](t,x,\xi) \coloneqq \sum_{k=0}^{K} \uhat_k(t,x) \phi_k(\xi)
\quad\text{with gPC modes}\quad
\uhat
\coloneqq
\begin{pmatrix}
\hat{\rho} \\ \hat{z}
\end{pmatrix}
\in\mathbb{R}^{2(K+1)}.
\end{equation*}
The piecewise polynomial functions $\phi_k(\xi)$ form an orthonormal basis with respect to the weighted inner product
$$
\big\langle \phi_i(\cdot), \phi_j(\cdot) \big\rangle
=
\int \phi_i(\xi) \phi_j(\xi) \d \mathbb{P}
=
\delta_{i,j}.
$$
If the random solution~$\u(t,x,\xi)$ is known, the gPC modes can be determined by the orthonormal projection
$ \big\langle \u(t,x,\cdot), \phi_k(\cdot) \big\rangle$. 
Under mild conditions on the probability measure the truncated expansion~\eqref{gPC} converges in the sense~${
	\big\lVert \G_K[\u](t,x,\cdot) - \u(t,x,\cdot) \big\rVert \rightarrow 0
}$
for~${
	K \rightarrow \infty
}$ \cite{S2,funaro2008,Ullmann2012}.

A challenge occurs, since only the gPC modes~$\uhat(0,x)$ corresponding to the initial data are known. To determine them for $t>0$, we derive a differential equation, called stochastic Galerkin formulation,  that describes their propagation in time and space.

\subsection{A semi-intrusive approach as introductory example}\label{SemiIntrusive}
A naive approach  would be
to substitute the truncated expansion~$\eqref{gPC}$ into the random system~\eqref{eq:in_system} 
and then use a Galerkin ansatz to project it onto the space spanned by the basis functions. The resulting system, without relaxation term,  reads as~$
\partial_t \uhat
+
\partial_x \fhat(\uhat)=\veco
$ 
for $\veco\in\mathbb{R}^{K+1}$ 
\begin{align}
&\text{with flux function}
 &\fhat\big(\uhat(t,x)\big)
  =&
 \Bigg\langle
 \f\bigg(  
 \sum\limits_{k=0}^K
 \uhat_k(t,x)\phi_k(\cdot)
 \bigg),\phi_i(\cdot)
 \Bigg\rangle_{i=0,\ldots,K} \label{useless1} \\
&\text{and Jacobian}
&\D_{\uhat} \fhat\big(\uhat(t,x)\big)
  =&
\begin{pmatrix}
\fhat_{{\rho},{\rho}}\big(\uhat(t,x)\big) & \fhat_{{\rho},{z}}\big(\uhat(t,x)\big) \\
\fhat_{{z},{\rho}}\big(\uhat(t,x)\big) & \fhat_{{z},{z}}\big(\uhat(t,x)\big) 
\end{pmatrix} \label{useless2} \\
&\text{consisting of block matrices}
&\fhat_{{\alpha},{\beta}}
\big(\uhat(t,x)\big)
=&
\Bigg\langle 
\partial_{{\alpha}} \f_{{\beta}} 
\bigg( \sum_{k=0}^K \u_k(t,x) \phi_k(\cdot) \bigg), \phi_i(\cdot) \phi_j(\cdot) 
\Bigg\rangle_{i,j=0,\ldots,K}.  \nonumber
\end{align}
Here, the Jacobian~$\D_{\uhat}  \fhat(\uhat)$  consists of the projected entries of the deterministic Jacobian~\eqref{DeterministicJacobian}. 
The Jacobian~\eqref{useless2}, however, has not necessarily real eigenvalues and a full set of eigenvectors. 
In the case of the Aw-Rascle-Zhang model, the flux function~\eqref{useless1} and its Jacobian~\eqref{useless2} are not even directly specifiable, since the deterministic expressions \eqref{eq:inhomo} and  \eqref{DeterministicJacobian} envolve the terms~$\nicefrac{z^2}{\rho}$, 
$\nicefrac{z}{\rho}$ and the possibly nonpolynomial hesitation function~$\hesitation(\rho)$. 
Computing numerically the integrals in equation~\eqref{useless1} and~\eqref{useless2} would lead to an expensive, non-hyperbolic, semi-intrusive scheme. 

\subsection{Intrusive formulation for general gPC expansions}

Instead, we follow the approaches~\cite{JinShu2018,dai2020hyperbolicity} to handle the terms ~$\nicefrac{z^2}{\rho}$ and  
$\nicefrac{z}{\rho}$. 
{
 We introduce the Riemann invariant $\velocity\coloneqq \nicefrac{z}{\rho}$ in the original ARZ model~\cite{aw2000SIAP}. }
While the semi-intrusive approach in Section~\ref{SemiIntrusive} computes the gPC modes~$\velocityhat$ by the orthonormal projection~$ \langle \velocity,\phi_k\rangle$, we project the product~$\rho \velocity $ and determine the modes by the pseudo-spectral Galerkin product 
$
\big\langle
\G_K[ \rho ]
\G_K[ \velocity ],
\phi_k
\big\rangle
=
\hat{z}_k. 
$
Similarly to~\cite{S15,S4,S18}, we express it by
\begin{equation}\label{GP}
\hat{\rho}
\ast
\velocityhat
\coloneqq
\P(\hat{\rho})
\velocityhat
=
\hat{z}\in\mathbb{R}^{K+1}
\quad\text{for}\quad
 \P(\hat{\rho})\coloneqq\sum\limits_{k=0}^K \hat{\rho}_k\mathcal{M}_k
 \quad\text{and}\quad
\mathcal{M}_k\coloneqq\langle \phi_k,\phi_i\phi_j \rangle_{i,j=0,\ldots,K}.
\end{equation}
The matrix~$\P(\hat{\rho})$ is strictly positive definite and hence invertible provided that the gPC expansion~$\G_K[ \hat{\rho} ]>0$ is strictly positive~\cite{Sonday2011,H8,FettesPaper}. \emph{The strict  positive definiteness of the matrix $\P(\hat{\rho})$ is assumed throughout this paper. This assumption excludes vacuum states.}   
We have for the inverse terms 
the pseudo-spectral gPC approximations 
$
\velocityhat
=
\P^{-1}(\hat{\rho}) \hat{z}
$ 
and 
$\hat{z}\ast \velocityhat $,  i.e.
$$
\bigg\lVert
\frac{z^2(\xi)}{\rho(\xi)}
-
\sum\limits_{k=0}^K
\big(
\hat{z}\ast \velocityhat
\big)_k
\phi_k(\xi)
\bigg\rVert
\rightarrow
0
\quad\text{and}\quad
\bigg\lVert
\frac{z(\xi)}{\rho(\xi)}
-
\sum\limits_{k=0}^K
 \velocityhat_k
\phi_k(\xi)
\bigg\rVert
\rightarrow
0
\quad\text{for}\quad
K\rightarrow\infty.
$$
This yields for general gPC bases a stochastic Galerkin formulation for the homogeneous ARZ model, without relaxation, as
\begin{equation}\label{gPCgeneral}
\begin{cases}
	\begin{aligned}
		\partial_t \hat{\rho} + \partial_x\Big(\hat{z}-\hat{\rho}\ast \hesitationhat(\hat{\rho}) \Big)&=\veco,\\
		\partial_t \hat{z} + \partial_x\Big( \hat{z}\ast \big( \P^{-1}(\hat{\rho}) \hat{z}\big) - \hat{z}\ast \hesitationhat(\hat{\rho}) \Big)
		&=\veco,
	\end{aligned}
\end{cases}
\end{equation}
where $\hesitationhat(\hat{\rho}) \in \mathbb{R}^{K+1} $ denotes a given gPC formulation of a hesitation function.  For example, the linear hesitation function  $h({\rho})=\rho$ has the gPC modes  $\hesitationhat(\hat{\rho})=\hat{\rho}$.
By using the following calculation rules, see e.g.~\cite{S5,JinShu2018,GersterHertyCicip2020},
\begin{equation}\label{eq:G_sym}
\hat{\rho} \ast \hat{z} = \hat{z} \ast \hat{\rho},
\quad
\D_{\hat{\rho}} \big[ \hat{\rho} \ast \hat{z} \big] 
=
\P(\hat{z}),
\quad
\D_{\hat{\rho}} \big[ \P^{-1} (\hat{\rho} )\hat{z} \big]
=
- 
 \P^{-1} (\hat{\rho} )\
\P
 \big(
\P^{-1} (\hat{\rho} )\hat{z} \big)
\end{equation}
we obtain the Jacobian of the gPC formulation~\eqref{gPCgeneral} as 
\begin{equation*}
\D_{\uhat} \fhat (\uhat )
=
\begin{pmatrix}
-\P\big( \hesitationhat(\hat{\rho}) \big)
-\P(\hat{\rho}) \hesitationhat'(\hat{\rho})
& \indikator \\
-
\P(\hat{z}) \P^{-1}(\hat{\rho}) 
\P( \P^{-1}(\hat{\rho}) \hat{z})
-
\P(\hat{z}) \hesitationhat'(\hat{\rho}) 
&
\P(\hat{z}) \P^{-1}(\hat{\rho}) 
+
\P\big(\P^{-1}(\hat{\rho}) \hat{z})
-\P\big( \hesitationhat(\hat{\rho}) \big)
\end{pmatrix},
\end{equation*}
where~$\indikator\coloneqq \diag\{1,\ldots,1\}$ denotes the identity matrix. 
The matrices~$\M_k$ and hence the linear operator~$\P\, :\, \mathbb{R}^{K+1} \rightarrow \mathbb{R}^{(K+1) \times(K+1) } $, defined in equation~\eqref{GP}, are \emph{exactly} computable in an offline stage. Therefore, the stochastic Galerkin formulation~\eqref{gPCgeneral} is intrusive and no numerical quadrature is needed during a simulation. Furthermore, the eigenvalues can be exactly computed. However, eigenvalues are not proven real which motivates the following subsection.

\subsection{Hyperbolic and intrusive formulation for wavelet-based gPC expansions} \label{sec:cons}
Under additional assumptions on the bases functions, hyperbolicity can be guaranteed.  We consider basis functions~$\phi_k$ that satisfy the following properties:\\

	\textup{(A1)}\quad 
	The precomputed matrices $\mathcal{M}_\ell$ and $\mathcal{M}_k$ commute for all ${\ell,k = 0,\ldots,K}$.
	
	\textup{(A2)}\quad
	There is an eigenvalue decomposition~${
	\P(\widehat{\alpha}) = V \mathcal{D}(\widehat{\alpha}) V^{\T}
	}$ 
	with constant eigenvectors.	
	
	\textup{(A3)}\quad
	The matrices~$\P(\widehat{\alpha})$ and $\P(\widehat{\beta})$ commute for all ${\widehat{\alpha}, \widehat{\beta} \in \mathbb{R}^{K+1}}$.\\

These properties  have been proven equivalent in~\cite[Lem.~4.1]{GersterHertyCicip2020}. 
Property~(A1) allows for a numerical verification in a precomputation step such that basis functions satisfy also the other properties, which may be difficult to prove analytically. 
Property~(A2) has been shown directly for the Wiener-Haar basis in~\cite[Appendix~B]{S5}, which we will consider in~Section~\ref{sec:num}. 
This property allows for an efficient numerical implementation, since the eigenvalues~$
\mathcal{D}(\widehat{\alpha}) 
=
V^{\T}
\P(\widehat{\alpha})  
V
$ are directly computable by a numerically cheap and stable matrix multiplication. 
Property (A3) has a technical benefit, needed for the following theoretical results. 
{
Following~\cite{GersterHertyCicip2020,StephansDiss},  polynomial functions~$\hesitation(\rho)=\rho^\gamma$, $\gamma\in\mathbb{N}$ 
 and their Jacobians are expressed as 
\begin{equation}\label{hesitationFunction}
\hesitationhat (\hat{\rho})
\coloneqq
\P^{\gamma-1}(\hat{\rho}) \hat{\rho}
=
V\mathcal{D}(\hat{\rho})^{\gamma-1} V^\T \hat{\rho}, \quad
\hesitationJacobi(\hat{\rho})
=
\D_{\hat{\rho}}
\hesitationhat(\hat{\rho})
=
\gamma \P^{\gamma-1}(\hat{\rho})
=
\gamma V\mathcal{D}(\hat{\rho})^{\gamma-1} V^\T. 
\end{equation} 
Furthermore, the equality 
$
\P\big( \P^{-1} (\widehat{\rho}) \widehat{z}\big)=\P^{-1}(\widehat{\rho})\P(\widehat{z})
$ is satisfied provided that 
properties~(A1)~--~(A3) hold. 
Equation~\eqref{hesitationFunction} and~\cite[Remark~1]{FettesPaper}, where the representation of nonpolynomial functions is discussed,  
motivate to assume  possibly nonpolynomial hesitation functions for~$\gamma\geq 1$ and a Jacobian of the form~$
\hesitationJacobi(\hat{\rho})=V\Dh(\hat{\rho}) V^\T 
$ 
with strictly positive eigenvalues~$\Dh(\hat{\rho})>0$.} 
Under these assumptions, we have  the stochastic Galerkin formulation 
$
\partial_t \uhat
+
\partial_x \fhat(\uhat)=\veco
$ 
for the homogeneous ARZ model
\begin{align}
&\text{with flux function}
&\fhat (\uhat)
&=
\begin{pmatrix}
\hat{z}
-
\P(\hat{\rho}) \hesitationhat(\hat{\rho}) \\
\P( \hat{z} ) \P^{-1}(\hat{\rho}) \hat{z}
 -
\P(\hat{z}) \hesitationhat(\hat{\rho})
\end{pmatrix} \label{flux_AwRascle} \\
&\text{and Jacobian}\quad
&\D_{\uhat} \fhat (\uhat)
&=
\begin{pmatrix}
-  \P\big(\hesitationhat (\hat{\rho} )\big)- \P(\hat{\rho}) \hesitationJacobi(\hat{\rho}) & \indikator \\
-
\P^2(\hat{z}) \P^{-2}(\hat{\rho})
-
\P(\hat{z}) \hesitationJacobi(\hat{\rho})
&
2 \P(\hat{z}) \P^{-1}(\hat{\rho}) 
-
\P\big( \hesitationhat(\hat{\rho}) \big)
\end{pmatrix}. \nonumber
\end{align}


\subsection{Stochastic Galerkin formulation for the inhomogeneous ARZ model}

The hyperbolic formulation, presented in Subsection~\ref{sec:cons}, is directly extendable to a stochastic Galerkin formulation for the inhomogeneous ARZ model. 
To this end, we assume an \emph{arbitrary, but consistent} gPC expansion~$\widehat{\Veq}(\hat{\rho})$ of the random equilibrium speed~$\Veq\big(\rho(\xi)\big)$, satisfying
$$
\bigg\lVert
\Veq\big(\rho(\xi)\big)
-
\sum\limits_{k=0}^K
\widehat{\Veq}(\hat{\rho})_k
\phi_k(\xi)
\bigg\rVert
\rightarrow
0
\quad\text{for}\quad
K\rightarrow\infty.
$$
Then, we introduce a stochastic Galerkin formulation of the relaxation term in the conservative formulation~\eqref{eq:inhomo}
by
\begin{equation}\label{ST}
\S_{\hat{z}} (\uhat)
\coloneqq
\hat{\rho}\ast
\Big(
\widehat{\Veq}(\hat{\rho}) 
-
\hat{v}(\hat{\rho},\hat{z})
\Big)
\quad
\text{with auxiliary variable}
\quad
\hat{v}(\hat{\rho},\hat{z})
=
\P^{-1}(\hat{\rho})  \hat{z} - \hesitationhat(\hat{\rho}).
\end{equation}
This auxiliary variable  
also allows to obtain a stochastic Galerkin formulation for the non-conservative formulation~\eqref{eq:arz_non_cons}. Altogether we have the \textbf{hyperbolic stochastic Galerkin formulations for the inhomogeneous ARZ model} in a 
\begin{align}
&\text{conservative form}
&& 
\begin{cases}
\begin{aligned}
\partial_t \hat{\rho} + \partial_x\Big(\hat{z}-\P(\hat{\rho}) \hesitationhat(\hat{\rho}) \Big)&=\veco,\\
\partial_t \hat{z} + \partial_x\Big( \P(\hat{z})  \P^{-1}(\hat{\rho}) \hat{z} - \P(\hat{z}) \hesitationhat(\hat{\rho}) \Big)
&=\frac{\hat{\rho}}{\tau}
\ast
\Big(
\widehat{\Veq}(\hat{\rho}) 
-
\hat{v}(\hat{\rho},\hat{z})
\Big),
\end{aligned}
\end{cases}
\label{C}\tag{$\mathcal{C}$}
\\
&\text{non-conservative form}
&&
\begin{cases}
\begin{aligned}
\partial_t \hat{\rho} + \partial_x\Big(\P(\hat{\rho}) \hat{v}\Big)&=\veco,\\
\partial_t \Big( \hat{v}+\hesitationhat(\hat{\rho}) \Big) + \P( \hat{v} ) \, \partial_x\Big(\hat{v}+\hesitationhat (\hat{\rho}) \Big)
&=
\frac{1}{\tau}
\Big( \widehat{\Veq}(\hat{\rho})-\hat{v}\Big). 
\end{aligned}
\end{cases}
\label{N}\tag{$\mathcal{N}$}
\end{align}


We show in Theorem~\ref{LEMMA_1} that these two formulations are equivalent for smooth solutions, as it holds in the deterministic case~\cite{aw2000SIAP}. 
However, if there is a jump in the solution, the non-conservative form  contains the product of the discontinuous matrix-valued
function~$\P(\hat{v})$ 
with the distributional derivative of the term 
$
\hat{v}+\hesitationhat(\hat{\rho})
$, 
 which may contain a
Dirac mass at the point of the jump. In general, such a product is not well-defined~\cite[Sec.~1]{BRESSAN}. 
Theorem~\ref{LEMMA_1} ensures that the system is strongly hyperbolic, which means that eigenvalues of the Jacobian~$\D_{\uhat} \fhat (\uhat)$, i.e.~the characteristic speeds of the hyperbolic system are real. Moreover, the Jacobian~$\D_{\uhat} \fhat (\uhat)$  admits a complete set of eigenvectors which implies that classical solutions are well-posed~\cite{Kreiss2013}. 

\begin{theorem}\label{LEMMA_1}
Let a gPC expansion with the properties~(A1)~--~(A3), 
a stochastic Galerkin formulation of a hesitation function~$\hesitationhat(\hat{\rho})$ and  a Galerkin formulation of an equilibrium  velocity~$\widehat{\Veq}(\hat{\rho})$ be given. 
Assume further a Jacobian of the hesitation function
\begin{equation*}
\widehat{\hesitation'}(\hat{\rho}) \coloneqq \D_{\hat{\rho}} \hesitationhat(\hat{\rho}) 
=
V \Dh (\hat{\rho}) V^\T 
\end{equation*}
with constant eigenvectors. 
Then, for smooth solutions the conservative~\eqref{C} and non-conservative~\eqref{N} stochastic Galerkin formulations to the inhomogeneous ARZ model are equivalent. The characteristic speeds are
$$
\widehat{\lambda_1}(\hat{\rho},\hat{z}) =
\mathcal{D}\big(\hat{v}(\hat{\rho},\hat{z}) \big) 
-
\Dh({\hat{\rho}} )
\mathcal{D}{(\hat{\rho}}) 
\quad\text{and}\quad
\widehat{\lambda_2}(\hat{\rho},\hat{z})=\mathcal{D}\big(\hat{v}(\hat{\rho},\hat{z}) \big)
\quad
\text{for}
\quad
\hat{v}(\hat{\rho},\hat{z})
=
\P^{-1}(\hat{\rho})  \hat{z} - \hesitationhat(\hat{\rho}),
$$
where 
$
\mathcal{D}(\hat{v}) 
$ 
denote the eigenvalues of the matrix 
$
\mathcal{P}(\hat{v})$.
Furthermore, the stochastic Galerkin formulations~\eqref{N}~and~\eqref{C} are strongly hyperbolic in the sense that the characteristic speeds are real 
and the Jacobian~$\D_{\uhat} \fhat (\uhat)$ admits a complete set of eigenvectors. 

\end{theorem}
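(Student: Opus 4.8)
The plan is to exploit assumption~(A2), which provides the constant orthogonal eigenvector matrix~$V$ that simultaneously diagonalizes every operator of the form~$\P(\cdot)$ as well as the Jacobian~$\hesitationJacobi(\hat{\rho})=V\Dh(\hat{\rho})V^\T$. Since the block entries of the Jacobian~$\D_{\uhat}\fhat(\uhat)$ in~\eqref{flux_AwRascle} are built entirely from such operators, conjugating the full $2(K+1)\times 2(K+1)$ matrix by the block-diagonal orthogonal matrix~$\mathrm{blkdiag}(V,V)$ turns each $(K+1)\times(K+1)$ block into a diagonal matrix. A symmetric permutation then brings the result into block-diagonal form consisting of $K+1$ decoupled $2\times 2$ blocks. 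This reduces both the eigenvalue computation and the hyperbolicity statement to the already-known deterministic case, applied mode by mode.

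First I would establish the equivalence of~\eqref{C} and~\eqref{N} for smooth solutions. Introducing the change of variables $\hat{v}=\P^{-1}(\hat{\rho})\hat{z}-\hesitationhat(\hat{\rho})$, so that $\hat{z}=\P(\hat{\rho})\big(\hat{v}+\hesitationhat(\hat{\rho})\big)$, I would differentiate and substitute exactly as in the deterministic derivation of~\cite{aw2000SIAP}. The computation goes through verbatim because, for differentiable modes, the calculation rules~\eqref{eq:G_sym}, the commutativity~(A3), and the identity $\P\big(\P^{-1}(\hat{\rho})\hat{z}\big)=\P^{-1}(\hat{\rho})\P(\hat{z})$ make the $\P$-algebra behave like ordinary scalar multiplication; in particular products may be reordered and the product rule applies to the matrix-vector products. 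This turns the conservative momentum equation into the non-conservative advection equation for $\hat{v}+\hesitationhat(\hat{\rho})$ and conversely, which is the claimed equivalence.

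Next I would compute the characteristic speeds. After the conjugation above, the $k$-th $2\times 2$ block is obtained by replacing each operator by its $k$-th diagonal eigenvalue, giving a matrix of exactly the deterministic form~\eqref{DeterministicJacobian} with the scalar substitutions $h(\rho)\mapsto \mathcal{D}(\hesitationhat(\hat{\rho}))_{kk}$, $\rho h'(\rho)\mapsto \mathcal{D}(\hat{\rho})_{kk}\Dh(\hat{\rho})_{kk}$ and $z/\rho\mapsto \mathcal{D}(\hat{z})_{kk}/\mathcal{D}(\hat{\rho})_{kk}$. The deterministic eigenvalues $\lambda_1=v-\rho h'(\rho)$ and $\lambda_2=v$ therefore give, collecting all modes, the diagonal matrices $\widehat{\lambda_1}=\mathcal{D}(\hat{v})-\Dh(\hat{\rho})\mathcal{D}(\hat{\rho})$ and $\widehat{\lambda_2}=\mathcal{D}(\hat{v})$. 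To identify the stated $\mathcal{D}(\hat{v})$, I would use that $\mathcal{D}(\cdot)=V^\T\P(\cdot)V$ depends linearly on its argument, so that $\mathcal{D}(\hat{v})=\mathcal{D}\big(\P^{-1}(\hat{\rho})\hat{z}\big)-\mathcal{D}(\hesitationhat(\hat{\rho}))=\mathcal{D}^{-1}(\hat{\rho})\mathcal{D}(\hat{z})-\mathcal{D}(\hesitationhat(\hat{\rho}))$, again invoking the product identity; this matches the entrywise value $\mathcal{D}(\hat{z})_{kk}/\mathcal{D}(\hat{\rho})_{kk}-\mathcal{D}(\hesitationhat(\hat{\rho}))_{kk}$.

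Finally, for strong hyperbolicity I would argue on each $2\times 2$ block. Reality of the eigenvalues is immediate since all diagonal entries are real, the positive definiteness of $\P(\hat{\rho})$ guarantees $\mathcal{D}(\hat{\rho})_{kk}>0$, and $\Dh(\hat{\rho})>0$ by hypothesis. The spectral gap within each block is $\widehat{\lambda_1}-\widehat{\lambda_2}=-\Dh(\hat{\rho})\mathcal{D}(\hat{\rho})<0$ strictly, so each block has two distinct real eigenvalues and hence two linearly independent eigenvectors; assembling these across the $K+1$ blocks and undoing the permutation and the conjugation by $\mathrm{blkdiag}(V,V)$ yields a complete set of $2(K+1)$ eigenvectors of $\D_{\uhat}\fhat(\uhat)$. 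I expect the main obstacle to be the bookkeeping in the equivalence step: one must verify that every reordering of $\P$-products needed in the deterministic calculation is actually licensed by~(A1)--(A3) and the product identity, since without simultaneous diagonalizability the matrix-valued product rule would generate non-commuting correction terms that would destroy the equivalence.
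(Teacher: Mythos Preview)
Your proposal is correct and follows the same overall strategy as the paper: use the commutativity properties (A1)--(A3) to carry over the deterministic calculation for the equivalence of \eqref{C} and \eqref{N}, then exploit the simultaneous diagonalization by $V$ to read off the eigenvalues and establish a complete eigenbasis.

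The one noteworthy difference lies in how the eigenvalues are computed. You work directly with the conservative Jacobian $\D_{\uhat}\fhat(\uhat)$ in the variables $(\hat\rho,\hat z)$: conjugating by $\mathrm{blkdiag}(V,V)$ and permuting yields $K{+}1$ decoupled $2\times2$ blocks, each a verbatim copy of the deterministic Jacobian~\eqref{DeterministicJacobian}, so the deterministic eigenvalues $v-\rho h'(\rho)$ and $v$ can be quoted mode by mode. The paper instead passes to the non-conservative variables $(\hat\rho,\hat v)$, derives the quasilinear form, and after conjugation by $V^\T$ obtains a block upper-triangular matrix
\[
\begin{pmatrix}
\mathcal{D}(\hat v) & \mathcal{D}(\hat\rho)\\
\zeros & \mathcal{D}(\hat v)-\Dh(\hat\rho)\mathcal{D}(\hat\rho)
\end{pmatrix},
\]
from which the eigenvalues are read off the diagonal. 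Your route makes the reduction to the deterministic $2\times2$ case maximally explicit and immediately addresses the Jacobian $\D_{\uhat}\fhat(\uhat)$ named in the theorem; the paper's route flows naturally from the equivalence proof it has just completed and requires one fewer permutation, at the cost of an implicit similarity argument linking the quasilinear matrix in $(\hat\rho,\hat v)$ back to $\D_{\uhat}\fhat(\uhat)$. Both reach the same conclusion, and your explicit invocation of the strict positivity $\Dh(\hat\rho)\mathcal{D}(\hat\rho)>0$ to separate the two eigenvalues within each block is exactly what is needed (and what the paper's ``sparsity structure'' remark relies on tacitly).
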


\begin{proof}{
Provided that properties~(A1)~--~(A3) hold, we have~$
	\P\big(
	\P(\hat{v})
	\hat{\rho}
	\big)
	=
	\P(\hat{v})
	\P(
	\hat{\rho}
	)
	$}	and the Galerkin product is symmetric \eqref{eq:G_sym}.
	 Hence, 
we obtain	
$$	
\big(\hat{\rho} \ast \hat{v}  \big) 
\ast
\big( \hat{v}+\hesitationhat (\hat{\rho}) \big)
=
\P\big(
\P(\hat{v})
\hat{\rho}
\big)
\P^{-1}(\hat{\rho})
\hat{z}
=
\P(\hat{v})
\hat{z}
=
\P(\hat{z})
\hat{v}
=
\P(\hat{z})
\P^{-1}(\hat{\rho})
\hat{z}
-
\P(\hat{z}) 
\hesitationhat(\hat{\rho}). 
$$	
Since the opertor~$\P(\hat{\rho})$ is linear,  the homogeneous part of the non-conservative formulation can be rewritten as
\begin{align*}
\veco
&=
\big( \hat{v}+\hesitationhat (\hat{\rho}) \big) \ast \Big[
\partial_t \hat{\rho} + \partial_x\big(\widehat{\rho} \ast \hat{v}\big) \Big]
+\hat{\rho} \ast \Big[
\partial_t \big( \hat{v}+\hesitationhat (\hat{\rho}) \big) +  \hat{v}  \ast \partial_x\big(\hat{v}+\hesitationhat (\hat{\rho}) \big) \Big] \\
&=
\partial_t 
\hat{z}
+
\partial_x
\Big(
\big(\hat{\rho} \ast \hat{v}  \big) 
\ast
\big( \hat{v}+\hesitationhat (\hat{\rho}) \big)
\Big) \\
&=
\partial_t \hat{z}
+
\partial_x
\Big(
\P(\hat{z})
\P^{-1}(\hat{z})
\hat{z}
-
\hat{z} \ast 
\hesitationhat(\hat{\rho})
\Big).
\end{align*}
{
Here, we have used the equality~$
\hat{\rho} \ast (\hat{v}\ast \partial_x ) 
=
(\hat{\rho} \ast \hat{v})\ast \partial_x  
$, which is satisfied provided that the  assumptions~{(A1)~--~(A3)} hold, but not for general gPC bases, since the Galerkin product is typically not associative~\cite{S18,S15}.} 
Likewise, the relaxation term in the conservative formulation is obtained by multiplying~$\P(\hat{\rho})$, i.e.~by applying the Galerkin product to the relaxation term of the non-conservative form. 
Therefore, the two formulations \eqref{C} and \eqref{N} are equivalent. 
To state the eigenvalues, we rewrite the first equation of the non-conservative form as
\begin{equation}
\label{LemmaEq1}
\veco
=
\hesitationJacobi(\hat{\rho})
\Big[
\partial_t \hat{\rho} + \partial_x\big(\widehat{\rho} \ast \hat{v}\big)
\Big]
=
\partial_t
\hesitationhat(\hat{\rho})
+
\hat{v}\ast
\hesitationhat(\hat{\rho})_x
+
\hesitationJacobi(\hat{\rho})
(\hat{\rho} \ast \hat{v}_x),
\end{equation}
where we have used the symmetry of the Galerkin product. 
By subtracting equation~\eqref{LemmaEq1} from the second equation in the non-conservative form and by using~property~(A2), i.e.~an eigenvalue decomposition with  constant, orthonormal eigenvectors~$V^\T=V^{-1}$, we obtain
\begin{align}
&\partial_t
\begin{pmatrix}
\hat{\rho} \\ \hat{v}
\end{pmatrix}
+
\begin{pmatrix}
\P(\hat{v}) & \P(\hat{\rho}) \\
\zeros
& \P( \hat{v} ) - \hesitationJacobi(\hat{\rho}) \P(\hat{\rho}) 
\end{pmatrix}
\partial_x 
\begin{pmatrix}
\hat{\rho} \\ \hat{v}
\end{pmatrix}
=
\veco \nonumber \\
\iff \qquad 
\partial_t&
\begin{pmatrix}
V^\T \hat{\rho} \\ V^\T \hat{v}
\end{pmatrix}
+
\begin{pmatrix}
\mathcal{D}(\hat{v}) & \mathcal{D}(\hat{\rho}) \\
\zeros & \mathcal{D}( \hat{v} ) 
- 
\Dh({\hat{\rho}} )
\mathcal{D}{(\hat{\rho}}) 
\end{pmatrix}
\partial_x 
\begin{pmatrix}
V^\T \hat{\rho} \\ V^\T \hat{v}
\end{pmatrix}
=
\veco \label{SparseMatrix}
\end{align}
for $\veco\in\mathbb{R}^{2(K+1)} $ and 
$\zeros\in\mathbb{R}^{(K+1)\times(K+1)} $. 
Due to the sparsity structure in the quasilinear form~\eqref{SparseMatrix} 
 a complete set of eigenvectors exists and eigenvalues $\hat{\lambda_1}$, $\hat{\lambda_2}$ are obtained.

\end{proof}

\section{Stability analysis of the inhomogeneous ARZ model}\label{sec:inhomo}

	


The parameter~${\tau>0}$ determines the relaxation of the velocity~$\hat{v}(\hat{\rho},\hat{z})$, given by equation~\eqref{ST} as auxiliary variable, towards the gPC modes~$\widehat{\Veq}(\hat{\rho})$ of the equilibrium velocity, which is a function of the density alone.  
We study in this section \emph{small, but positive} values of the relaxation paramter~$\tau>0$, when the ARZ model is close to the  
\begin{align}
&\text{equilibrium model}
&&
\partial_t 
\hat{\rho}
+
\partial_x
\widehat{\f_{\textup{eq}}}(\hat{\rho})
=\veco,\quad
\widehat{\f_{\textup{eq}}}(\hat{\rho})
=
 \hat{\rho} \ast \widehat{\Veq}(\hat{\rho}) 
 \label{modeleq} \\
&\text{with Jacobian}
&&
\D_{\hat{\rho}}
\Big( \hat{\rho} \ast \widehat{\Veq}(\hat{\rho})  
\Big)
=
\P \Big( \widehat{\Veq}(\hat{\rho}) \Big)
+
\P( \hat{\rho} ) 
\D_{\hat{\rho}}
\widehat{\Veq}(\hat{\rho}).
\label{Jacobieq} 
\end{align}
We observe from the Jacobian~\eqref{Jacobieq} that an eigenvalue decomposition of the equilibrium velocity of the form
$$
\widehat{\Veq'}(\hat{\rho})
\coloneqq
\D_{\hat{\rho}}
\widehat{\Veq}(\hat{\rho})
=
V 
\DVeq (\hat{\rho})
V^\T
\quad
\text{with negative eigenvalues}
\quad
\DVeq (\hat{\rho})<\veco
$$
should be assumed such that 
all  waves of the equilibrium model propagate at the {characteristic speeds
$$
\Deq(\hat{\rho})
\coloneqq
\mathcal{D} \Big( \widehat{\Veq}(\hat{\rho}) \Big)
+
\mathcal{D}( \hat{\rho} ) 
\DVeq (\hat{\rho})
$$
not exceeding the equilibrium velocity.} This is identified by the eigenvalues of the matrix $\P\Big(\widehat{\Veq}(\hat{\rho})\Big)$. 
Analogously to the analysis in~\cite{Chen1994,Seibold2013,herty2020bgk,Guiseppe2021}, we use a  Chapman-Enskog-type expansion that allows to study the behaviour of first-order perturbations of the equilibrium velocity. 
This yields a diffusion correction as stated in the following theorem.


\begin{theorem}\label{thm:inhomog}
Let a gPC expansion with the properties~(A1)~--~(A3), 
a stochastic Galerkin formulation of a hesitation function~$\hesitationhat(\hat{\rho})$ and  a Galerkin formulation of an equilibrium  velocity~$\widehat{\Veq}(\hat{\rho})$ be given. 
Assume further that the Jacobians can be written as
$$
\widehat{\Veq'}(\hat{\rho}) \coloneqq \D_{\hat{\rho}} 
\widehat{\Veq}(\hat{\rho}) 
=
V \DVeq (\hat{\rho}) V^\T 
\quad\text{and}\quad
\widehat{\hesitation'}(\hat{\rho}) \coloneqq \D_{\hat{\rho}} \hesitationhat(\hat{\rho}) 
=
V \Dh (\hat{\rho}) V^\T 
$$ 
with constant eigenvectors. 
The first-order correction to the local equilibrium approximation reads
\begin{equation}\label{DIhat}\tag{$\widehat{\textup{DI}}$}
\partial_t \hat{\rho} 
+
\partial_x  \widehat{\f_{\textup{eq}}}(\hat{\rho})
=
\tau
\partial_x
\big(
 \hat{\mu}( \hat{\rho} )
\partial_x\hat{\rho}
\big)
,\quad
\hat{\mu}(\hat{\rho})
\coloneqq
-\,
V \bigg[
\mathcal{D}(\hat{\rho})^2  \DVeq(\hat{\rho}) \Big( \DVeq(\hat{\rho}) + \Dh(\hat{\rho}) \Big)
\bigg] V^\T.
\end{equation}
Furthermore, it is dissipative if and only if the sub-characteristic condition 
\begin{equation}\label{SChat}\tag{$\widehat{\textup{SC}}$}
\widehat{\lambda_1}( \hat{\rho},\hat{z} )
\leq
\Deq(\hat{\rho})
\leq 
\widehat{\lambda_2}( \hat{\rho},\hat{z} )
\quad
\text{holds on}
\quad
\hat{z}=
\hat{\rho} \ast \Big( 
\widehat{\Veq}(\hat{\rho})
+
\hat{h}(\hat{\rho})
\Big)
\quad\text{with}\quad
\DVeq (\hat{\rho})<\veco.
\end{equation}

\end{theorem}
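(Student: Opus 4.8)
The plan is to run a Chapman-Enskog-type expansion on the non-conservative formulation~\eqref{N}, mirroring the deterministic passage from~\eqref{eq:arz_non_cons} to~\eqref{DI} but retaining the matrix structure. The decisive simplification is that properties~(A1)--(A3), together with the assumed decompositions $\widehat{\Veq'}(\hat{\rho})=V\DVeq(\hat{\rho})V^\T$ and $\hesitationJacobi(\hat{\rho})=V\Dh(\hat{\rho})V^\T$, let me diagonalize \emph{every} matrix in the computation by the single constant orthonormal $V$ of property~(A2). Consequently $\P(\hat{\rho})$, $\P(\widehat{\Veq}(\hat{\rho}))$, $\widehat{\Veq'}(\hat{\rho})$ and $\hesitationJacobi(\hat{\rho})$ all commute, so after applying $V^\T$ the whole expansion decouples into $K+1$ scalar channels on the diagonals $\mathcal{D}(\,\cdot\,)$. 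First I would insert the ansatz $\hat{v}=\widehat{\Veq}(\hat{\rho})+\tau\,\hat{v}^{(1)}+\mathcal{O}(\tau^2)$ into~\eqref{N}.

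At order $\tau^0$ the density equation of~\eqref{N} reproduces the equilibrium model~\eqref{modeleq}, yielding the closure $\partial_t\hat{\rho}=-\partial_x\widehat{\f_{\textup{eq}}}(\hat{\rho})$ whose Jacobian~\eqref{Jacobieq} equals $V\Deq(\hat{\rho})V^\T$. Multiplying the velocity equation by $\tau$ and collecting the $\mathcal{O}(\tau)$ terms gives $-\hat{v}^{(1)}=\partial_t\big(\widehat{\Veq}(\hat{\rho})+\hesitationhat(\hat{\rho})\big)+\P\big(\widehat{\Veq}(\hat{\rho})\big)\,\partial_x\big(\widehat{\Veq}(\hat{\rho})+\hesitationhat(\hat{\rho})\big)$. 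I would then eliminate the time derivative through the leading-order closure and the chain rule $\partial_t\widehat{\Veq}(\hat{\rho})=\widehat{\Veq'}(\hat{\rho})\partial_t\hat{\rho}$, $\partial_t\hesitationhat(\hat{\rho})=\hesitationJacobi(\hat{\rho})\partial_t\hat{\rho}$. Passing to the $V^\T$-transformed variables turns this into a diagonal identity; factoring out $\big(\DVeq(\hat{\rho})+\Dh(\hat{\rho})\big)$ and using $\Deq(\hat{\rho})=\mathcal{D}(\widehat{\Veq}(\hat{\rho}))+\mathcal{D}(\hat{\rho})\DVeq(\hat{\rho})$ collapses the remaining bracket to $-\mathcal{D}(\hat{\rho})\DVeq(\hat{\rho})$, so that $\hat{v}^{(1)}=V\,\mathcal{D}(\hat{\rho})\DVeq(\hat{\rho})\big(\DVeq(\hat{\rho})+\Dh(\hat{\rho})\big)V^\T\partial_x\hat{\rho}$. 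Substituting $\hat{v}^{(1)}$ back into the density equation $\partial_t\hat{\rho}+\partial_x(\P(\hat{\rho})\hat{v})=\veco$ and picking up the extra factor $\P(\hat{\rho})=V\mathcal{D}(\hat{\rho})V^\T$ produces exactly the coefficient $\hat{\mu}(\hat{\rho})$ claimed in~\eqref{DIhat}.

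For the dissipativity characterization I would note that, since $\hat{\mu}(\hat{\rho})$ is diagonalized by $V$, dissipativity is nonnegativity of each diagonal eigenvalue $-[\mathcal{D}(\hat{\rho})]^2\DVeq(\hat{\rho})\big(\DVeq(\hat{\rho})+\Dh(\hat{\rho})\big)$. Because $\mathcal{D}(\hat{\rho})>\veco$ (strict positive definiteness of $\P(\hat{\rho})$) and $\DVeq(\hat{\rho})<\veco$ by hypothesis, this reduces channelwise to $\DVeq(\hat{\rho})+\Dh(\hat{\rho})\geq\veco$. On the other side I would evaluate the speeds of Theorem~\ref{LEMMA_1} on the equilibrium manifold $\hat{z}=\hat{\rho}\ast\big(\widehat{\Veq}(\hat{\rho})+\hesitationhat(\hat{\rho})\big)$, where $\hat{v}=\widehat{\Veq}(\hat{\rho})$ and hence $\mathcal{D}(\hat{v})=\mathcal{D}(\widehat{\Veq}(\hat{\rho}))$. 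The upper inequality $\Deq(\hat{\rho})\leq\widehat{\lambda_2}$ then holds automatically, reading $\mathcal{D}(\hat{\rho})\DVeq(\hat{\rho})\leq\veco$, while the lower inequality $\widehat{\lambda_1}\leq\Deq(\hat{\rho})$ reduces, after cancelling the positive factor $\mathcal{D}(\hat{\rho})$, to precisely $\DVeq(\hat{\rho})+\Dh(\hat{\rho})\geq\veco$. Thus~\eqref{SChat} and dissipativity are equivalent to the same diagonal condition, giving the asserted equivalence.

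I expect the main obstacle to be the bookkeeping of the Chapman-Enskog step, namely substituting the leading-order time derivative correctly into the $\mathcal{O}(\tau)$ balance and confirming that each matrix product genuinely commutes: without~(A1)--(A3) the Galerkin product is neither commutative nor associative and the reduction to scalar channels would break down. Once simultaneous diagonalization by $V$ is justified, the remaining manipulations are routine and act componentwise.
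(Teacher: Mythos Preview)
Your proposal is correct and follows essentially the same route as the paper: a Chapman-Enskog ansatz $\hat{v}=\widehat{\Veq}(\hat{\rho})+\tau\,\widehat{v^{(1)}}+\mathcal{O}(\tau^2)$ inserted into the non-conservative formulation~\eqref{N}, elimination of $\partial_t\hat{\rho}$ via the leading-order equilibrium model~\eqref{modeleq}, and back-substitution into the density equation to produce~\eqref{DIhat}. The paper carries out exactly this computation, writing the intermediate step as $-\widehat{v^{(1)}}=\big(\widehat{\Veq'}(\hat{\rho})+\widehat{h'}(\hat{\rho})\big)\big(\partial_t\hat{\rho}+\P(\widehat{\Veq}(\hat{\rho}))\partial_x\hat{\rho}\big)$ and then reducing the bracket to $-\P(\hat{\rho})\widehat{\Veq'}(\hat{\rho})\partial_x\hat{\rho}$; your explicit passage to $V^\T$-coordinates and channelwise reduction is a clean way to organize the same commuting-matrix algebra.

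One point worth noting: the paper's written proof stops after establishing~\eqref{DIhat} and does not spell out the equivalence with~\eqref{SChat}. Your second paragraph, evaluating $\widehat{\lambda_1},\widehat{\lambda_2}$ on the equilibrium manifold and showing that both dissipativity of $\hat{\mu}(\hat{\rho})$ and the sub-characteristic inequalities reduce to the same diagonal condition $\DVeq(\hat{\rho})+\Dh(\hat{\rho})\geq\veco$, supplies precisely the argument the paper leaves implicit. So your proposal is not only aligned with the paper's approach but in fact more complete on this second claim.
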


\begin{proof}
We apply a Chapman-Enskog expansion 
$$
\hat{v} =  \widehat{\Veq}(\hat{\rho}) 
+
\tau \vI 
+
\mathcal{O}\big(\tau^2 \big). 
$$
The linearity 
$
\P(\hat{\alpha}+\beta)
=
\P(\hat{\alpha})+\P(\beta)
$
implies
\begin{alignat*}{8}	
&\hat{v}
\ast
\partial_x
\hat{v}
&&=
\widehat{\Veq}(\hat{\rho}) \ast \partial_x \widehat{\Veq}(\hat{\rho})
&&+
\mathcal{O}(\tau)
&&=
\P\Big( \widehat{\Veq}(\hat{\rho}) \Big)  \widehat{\Veq'}(\hat{\rho})
\partial_x \hat{\rho} 
&&+ \mathcal{O}(\tau), \\
&\partial_x \big(
\hat{\rho}
\ast
\hat{v}
\big)
&&=
\partial_x \Big(
\hat{\rho} \ast \widehat{\Veq}(\hat{\rho})
\Big)
&&+
\mathcal{O}(\tau)
&&=
\bigg[
\P\Big( \widehat{\Veq}(\hat{\rho}) \Big) 
+
\P( \hat{\rho}) \widehat{\Veq'}(\hat{\rho}) 
\bigg] \partial_x \hat{\rho}
&&+
\mathcal{O}(\tau).
\end{alignat*}
Hence, in the non-conservative formulation we obtain
$$
-\vI
=
\frac{\widehat{\Veq}(\hat{\rho}) - \hat{v}}{\tau}
+\mathcal{O}(\tau)
=
\partial_t 
\Big(
\widehat{\Veq}(\hat{\rho})+\hat{h}(\hat{\rho}) 
\Big)
+
\widehat{\Veq}(\hat{\rho}) \ast \partial_x 
\Big(
\widehat{\Veq}(\hat{\rho})
+
\hat{h}(\hat{\rho}) 
\Big)
+
\mathcal{O}(\tau).
$$
The symmetry of the Galerkin product and the equilibrium model~\eqref{modeleq} yield 
$$
-\vI
=
\bigg(
\widehat{\Veq'}(\hat{\rho})+\widehat{h'}(\hat{\rho}) 
\bigg)
\bigg(
\partial_t \hat{\rho}
+
\P\Big(
\widehat{\Veq}(\hat{\rho})
\Big) \partial_x\hat{\rho}
\bigg)
+ \mathcal{O}(\tau)
=
\bigg(
\widehat{\Veq'}(\hat{\rho})+\widehat{h'}(\hat{\rho}) 
\bigg)
\P( \hat{\rho} )
\widehat{\Veq'}(\hat{\rho}) 
+ \mathcal{O}(\tau), 
$$
which implies the claim 
$$
\partial_t \hat{\rho} 
+
\partial_x  \widehat{\f_{\textup{eq}}}(\hat{\rho})
=
\tau
\partial_x
\big(
\hat{\mu}( \hat{\rho} )
\partial_x \hat{\rho}
\big)
+\mathcal{O}\big(\tau^2\big).
$$

\end{proof}

Theorem~\ref{thm:inhomog} gives conditions 
{to properly choose} a
hesitation  function~$\hesitation(\rho)$ and an equilibrium velocity~$\Veq(\rho)$. In the deterministic case, various choices have been investiated  to model also 
phantom traffic jams and stop-and-go waves by introducing a \emph{negative} diffusion coefficient~\cite{Greenberg2004,Rosales2009,Guiseppe2021}. 
Here, we investigate states close to the equilibrium and choose a hesitation  function~$\hesitation(\rho)$ and an equilibrium velocity~$\Veq(\rho)$ such that sub-characteristic condition is fulfilled. 
{The following corollary extends a widely used class, which includes the Greenshields flux, 
	see e.g.~\cite{greenshields_study_1935,Greemberg2002,Siebel2005} for the deterministic case, to the derived stochastic Galerkin formulation. 
}

\begin{corollary}\label{Corollary}
Let an equilibrium velocity and a hesitation function of the form 
$$
\Veq(\rho)
=
\frac{v_{\max}}{\rho_{\max}}
\Big(
\rho_{\max}-\rho^\gamma
\Big)
\quad\text{and}\quad
\hesitation(\rho)
=
\Veq(0)
-
\Veq(\rho)
=
\frac{v_{\max}}{\rho_{\max}}
\rho^\gamma
$$
with strictly positive constants $v_{\max}, \rho_{\max},\gamma$
be given. Under the assumptions of Theorem~\ref{LEMMA_1} and Theorem~\ref{thm:inhomog} 
 the sub-characteristic condition~\eqref{SChat} is satisfied for the stochastic Galerkin formulations
$$
\widehat{\Veq}(\hat{\rho})
=
\frac{v_{\max}}{\rho_{\max}}
\bigg(
\rho_{\max} e_1- \P(\hat{\rho})^{\gamma-1}\hat{\rho}
\bigg)
\quad
\text{and}
\quad
\hesitationhat(\hat{\rho})
=
\widehat{\Veq}(\veco)
-
\widehat{\Veq}(\hat{\rho})
=
\frac{v_{\max}}{\rho_{\max}}
\P(\hat{\rho})^{\gamma-1}\hat{\rho}
$$
with unit vector 
$e_1=(1,0,\ldots,0)^\T$.
\end{corollary}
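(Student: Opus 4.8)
The plan is to invoke the equivalence established in Theorem~\ref{thm:inhomog}: the first-order correction~\eqref{DIhat} is dissipative if and only if the sub-characteristic condition~\eqref{SChat} holds. It therefore suffices to show that the diffusion coefficient $\hat{\mu}(\hat{\rho})$ is positive semidefinite. Both prescribed objects are of the admissible power-law type~\eqref{hesitationFunction}, so the Jacobians $\widehat{\Veq'}(\hat{\rho})$ and $\hesitationJacobi(\hat{\rho})$ are simultaneously diagonalized by the same constant eigenvectors~$V$ from property~(A2); this lets me reason entirely on the level of the diagonal eigenvalue matrices $\DVeq(\hat{\rho})$ and $\Dh(\hat{\rho})$, and reduces the claim to showing that the diagonal factor $\mathcal{D}(\hat{\rho})^2\DVeq(\hat{\rho})\big(\DVeq(\hat{\rho})+\Dh(\hat{\rho})\big)$ inside $\hat{\mu}(\hat{\rho})$ is entrywise nonpositive.

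The key structural observation is that, by construction, the hesitation function is the complement of the equilibrium velocity, $\hesitation(\rho)=\Veq(0)-\Veq(\rho)$, and this relation is inherited verbatim by the Galerkin formulations, $\hesitationhat(\hat{\rho})=\widehat{\Veq}(\veco)-\widehat{\Veq}(\hat{\rho})$. First I would differentiate this identity with respect to $\hat{\rho}$. Since $\widehat{\Veq}(\veco)=v_{\max}e_1$ is a constant vector with vanishing Jacobian, this yields $\hesitationJacobi(\hat{\rho})=-\widehat{\Veq'}(\hat{\rho})$, and hence on the level of eigenvalues
\begin{equation*}
\Dh(\hat{\rho})=-\DVeq(\hat{\rho}),\qquad\text{so that}\qquad \DVeq(\hat{\rho})+\Dh(\hat{\rho})=\zeros.
\end{equation*}
Substituting this into the diffusion coefficient of~\eqref{DIhat} annihilates the middle factor and gives $\hat{\mu}(\hat{\rho})=\zeros$, which is trivially positive semidefinite. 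Thus~\eqref{DIhat} is dissipative and, by Theorem~\ref{thm:inhomog}, the sub-characteristic condition~\eqref{SChat} follows.

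It then remains to verify the accompanying sign requirement $\DVeq(\hat{\rho})<\veco$. Applying~\eqref{hesitationFunction} to $\hesitationhat(\hat{\rho})=\frac{v_{\max}}{\rho_{\max}}\P^{\gamma-1}(\hat{\rho})\hat{\rho}$ lets me read off $\Dh(\hat{\rho})=\gamma\frac{v_{\max}}{\rho_{\max}}\mathcal{D}(\hat{\rho})^{\gamma-1}$ and therefore $\DVeq(\hat{\rho})=-\gamma\frac{v_{\max}}{\rho_{\max}}\mathcal{D}(\hat{\rho})^{\gamma-1}$. Because the standing assumption excludes vacuum states, the operator $\P(\hat{\rho})$ is strictly positive definite, so its eigenvalues satisfy $\mathcal{D}(\hat{\rho})>\veco$; combined with $v_{\max},\rho_{\max},\gamma>0$, every diagonal entry of $\DVeq(\hat{\rho})$ is strictly negative, as required.

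The point I expect to demand the most care is the \emph{borderline} nature of the result: the identity $\DVeq+\Dh=\zeros$ forces $\hat{\mu}(\hat{\rho})=\zeros$ exactly, so this class \emph{saturates} the sub-characteristic condition rather than satisfying it strictly. To make this transparent I would evaluate the characteristic speeds of Theorem~\ref{LEMMA_1} on the equilibrium manifold $\hat{z}=\hat{\rho}\ast(\widehat{\Veq}(\hat{\rho})+\hat{h}(\hat{\rho}))$, where the auxiliary variable collapses to $\hat{v}=\widehat{\Veq}(\hat{\rho})$; this gives $\widehat{\lambda_1}=\Deq$, i.e.\ the left inequality in~\eqref{SChat} holds with equality, while the right inequality $\Deq\leq\widehat{\lambda_2}$ is strict precisely because $\DVeq<\veco$. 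The only subtlety is thus to stress that the ``dissipative'' conclusion of Theorem~\ref{thm:inhomog} is meant in the positive-semidefinite sense, so that the vanishing diffusion coefficient is admissible and the corollary indeed follows.
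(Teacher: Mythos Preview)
Your proposal is correct and follows essentially the same route as the paper: both arguments compute the Jacobians via~\eqref{hesitationFunction}, observe that $\widehat{\Veq'}(\hat{\rho})=-\widehat{h'}(\hat{\rho})$ so that $\DVeq+\Dh=\zeros$, and then verify $\DVeq(\hat{\rho})<\veco$ from the strict positive definiteness of $\P(\hat{\rho})^{\gamma-1}$. Your added discussion of the borderline nature (equality in the left inequality of~\eqref{SChat}) is a helpful clarification that the paper leaves implicit.
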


\begin{proof}
Equation~\eqref{hesitationFunction} yields 
the stochastic Galerkin formulations and
$$
\widehat{\Veq'}(\hat{\rho})
=
-\gamma \frac{v_{\max}}{\rho_{\max}}
\P(\hat{\rho})^{\gamma-1}
=
-\widehat{h'}(\hat{\rho}) 
\quad
\Leftrightarrow
\quad
\zeros
=
\widehat{\Veq'}(\hat{\rho})+\widehat{h'}(\hat{\rho}) 
\quad
\Leftrightarrow
\quad
\veco
=
\DVeq(\hat{\rho}) + \Dh(\hat{\rho}). 
$$
The matrices~$
\P(\hat{\rho}) 
$ 
and 
$
\P(\hat{\rho})^{\gamma-1}
=
V
\mathcal{D}(\hat{\rho})^{\gamma-1}
V^\T
$
are strictly positive definite. Hence, the Jacobian~$\widehat{\Veq'}(\hat{\rho})$ is strictly negative definite and we have~$\DVeq(\hat{\rho})<\veco $. 

\end{proof}

\section{Numerical results}\label{sec:num} 
The introduction of the gPC modes~$\hat{v}$ as auxiliary variable also allows for an efficient numerical evaluation of the flux function~\eqref{flux_AwRascle}, the relaxation term~\eqref{ST} and the computation of eigenvalues by the numerically cheap and stable matrix vector multiplications
$$
\begin{aligned}
\mathcal{D}(\hat{\rho})
&=
V^\T \mathcal{P}(\hat{\rho}) V, \\
\mathcal{D}\big(\hat{v}(\hat{\rho},\hat{z})\big)
&=
V^\T \mathcal{P}\big(\hat{v}( \hat{\rho},\hat{z} )\big) V, \\
\Dh(\hat{\rho})
&=
V^\T\, \widehat{\hesitation'}(\hat{\rho}) V,
\end{aligned}\qquad
\begin{aligned}
\widehat{\lambda_2}(\hat{\rho},\hat{z})&=\mathcal{D}\big(\hat{v}(\hat{\rho},\hat{z}) \big), \\
\widehat{\lambda_1}(\hat{\rho},\hat{z}) &=
\widehat{\lambda_2}(\hat{\rho},\hat{z})
-
\Dh({\hat{\rho}} )
\mathcal{D}{(\hat{\rho}}) , \\
\hat{v}(\hat{\rho},\hat{z})
&=
V \mathcal{D}^{-1} (\hat{\rho}) V^\T  \hat{z} - \hesitationhat(\hat{\rho}),
\end{aligned}\quad
\begin{aligned}
\fhat (\uhat)
&=
\begin{pmatrix}
\mathcal{P}( \hat{\rho} )  \hat{v}(\hat{\rho},\hat{z}) \\
\mathcal{P}(\hat{z} )  \hat{v}(\hat{\rho},\hat{z}) 
\end{pmatrix}, \\
\S_{\hat{z}} (\uhat)
&=
\mathcal{P}( \hat{\rho} )
\Big(
\widehat{\Veq}(\hat{\rho}) 
-
\hat{v}(\hat{\rho},\hat{z})
\Big).
\end{aligned}
$$
Hence, the computational complexity grows like~$K^2$, which  is relatively low compared to approaches with entropy and Roe variables~\cite{S5,FettesPaper}. The price is the restriction to gPC bases that satisfy the assumptions~(A1)~--~(A3). 
Here, we use the Haar sequence~\cite{Haar1910,S4,S16} with level~${J\in\mathbb{N}_0}$ that generates a  gPC basis~$\mathbb{S}_K$  with ${K+1=2^{J+1}}$ elements by 
\begin{align*}
&	\ \,
\mathbb{S}_K \coloneqq \Big\{ 1, \psi(\xi), \psi_{j,k}(\xi) \ \big| \ k=0,\ldots,2^j-1, \ j=1,\ldots,J  \Big\} 
\quad \ \text{for} \\
&	\psi_{j,k}(\xi)
\coloneqq
2^{\nicefrac{j}{2}}\psi\big(2^j \xi - k \big) 
\quad \text{ and } \quad
\psi(\xi) \coloneqq
\begin{cases}
1  & \text{if \ } 0 \leq \xi < \nicefrac{1}{2},  \\
-1 & \text{if \ } \nicefrac{1}{2} \leq \xi < 1,  \\
0  & \text{else.}
\end{cases}
\end{align*}
Using a lexicographical order we identify the gPC basis  
${\phi_0 = 1}$, 
${\phi_1 = \psi}$, 
${\phi_2 = \psi_{1,0}}$, 
${\phi_3 = \psi_{1,1}}$, etc.\\

\noindent
An equidistant space discretization~${\Delta x>0}$ is used to divide the space interval~$[0,x_{\textup{end}}]$ into $N$~cells such that~${\Delta x N = x_{\textup{end}}}$ with centers~${x_j \coloneqq \big(j+\frac{1}{2}\big) \Delta x}$  and edges~${x_{j-{\nicefrac{1}{2}}} \coloneqq j \Delta x}$.
The discrete time steps are denoted by~${t_k\coloneqq k \Delta t}$ for~${k \in \mathbb{N}_0}$. 
Due to the eigenvalue estimates 
$
\big|  \widehat{\lambda_1}(\hat{\rho},\hat{z}) \big|
\leq \big| \widehat{\lambda_2}(\hat{\rho},\hat{z}) \big|
=
\big| \mathcal{D}\big(\hat{v}(\hat{\rho},\hat{z}) \big) \big|
$ 
a local Lax-Friedrichs flux~\cite{Leveque2} is efficiently  evaluated as 
\begin{equation*}
\NumFlux(\caStoConservedLeft,\caStoConservedRight) 
\coloneqq \frac{1}{2} \Big[ \fhat(\caStoConservedLeft) + \fhat(\caStoConservedRight) \Big] + 
\frac{1}{2}  
\max\limits_{j=\boldsymbol{\ell},\boldsymbol{r}} \bigg\{ \Big|
\mathcal{D}\big(\hat{v}( \caStoConserved_j )\big)
\Big| \bigg\}
(\caStoConservedLeft - \caStoConservedRight).
\end{equation*}
For numerical purposes the relaxation term is expressed as
$$
\S_{\hat{z}} (\uhat)
\coloneqq
\hat{\rho}\ast
\Big(
\widehat{\Veq}(\hat{\rho}) 
-
\hat{v}(\hat{\rho},\hat{z})
\Big)
=
\widehat{M}(\hat{\rho})-\hat{z}
\quad\text{for}\quad
\widehat{M}(\hat{\rho})
\coloneqq
\hat{\rho} \ast 
\Big(
\widehat{\Veq}(\hat{\rho}) 
+\hat{h}(\hat{\rho}) 
\Big).
$$
Since the term~$\S_{\hat{z}} (\uhat)$ depends also in the  stochastic Galerkin formulation  on the unknown~$\hat{z}\in\mathbb{R}^{K+1}$ in a linear way, 
 a {first-order} IMEX scheme~\cite{pareschi2005implicit,herty2020bgk,Pieraccini_Puppo2006}, which treats the advection part explicitly and the possibly stiff relaxation implicitly, can be employed:
\begin{align*}
& \caStoConserved_j^{k+1} = \caStoConserved_j^{k} - \frac{\Delta t}{\Delta x} \Big( \NumFlux \big(\boldsymbol{\bar{u}}_j^{(1)},\boldsymbol{\bar{u}}_{j+1}^{(1)} \big)
-
\NumFlux \big(\boldsymbol{\bar{u}}_{j-1}^{(1)},\boldsymbol{\bar{u}}_j^{(1)}\big)
\Big), \\
&\text{with}\quad
\boldsymbol{\bar{u}}_j^k
=
\Big(
\boldsymbol{\bar{\rho}}_j^k, 
\boldsymbol{\bar{z}}_j^k
\Big)^\T,
\quad
\boldsymbol{\bar{u}}_j^{(1)}
=
\Big(
\boldsymbol{\bar{\rho}}_j^{(1)},
\boldsymbol{\bar{z}}_j^{(1)}
\Big)^\T
\quad\text{and}\quad
\begin{cases}
\begin{aligned}
\boldsymbol{\bar{\rho}}_j^{(1)}
&=
\boldsymbol{\bar{\rho}}_j^k, \\
\boldsymbol{\bar{z}}_j^{(1)}
&=
\frac{\tau}{\tau + \Delta t}
\boldsymbol{\bar{z}}_j^k
+
\frac{\Delta t}{\tau + \Delta t}
\widehat{M}\big(\boldsymbol{\bar{\rho}}_j^k\big)
\end{aligned}
\end{cases}
\end{align*}
In the sequel, we consider a linear hesitation function and { a relaxation towards the LWR
model, i.e}
$$
\begin{cases}
\begin{aligned}
\partial_t \rho + \partial_x(\rho v)&=0,\\
\partial_t \big(v+\rho\big) + v \partial_x\big(v+\rho\big)
&=
\frac{1}{\tau}
\Big(\Veq(\rho)-v\Big)
\end{aligned}
\end{cases}
\quad
\text{with equilibrium velocity}
\qquad
\Veq(\rho)=1-\rho
$$ 
{and normalized density in the equilibrium model.} 
According to Corollary~\ref{Corollary} the sub-characteristic condition is fulfilled and solutions to the ARZ model are expected to be close to the LWR model if the relaxation parameter $\tau>0$ is sufficiently small. 
Moreover reference solutions are provided, where a Monte-Carlo method is applied to the analytical solution with $M=10^6$~uniformly distributed samples~$\rho_\ell(\xi) \sim \mathcal{U}$ for either of the following Riemann problems:
\begin{align}\tag{shock}
\rho(x,0,\xi)&=\begin{cases}
\rho_\ell(\xi) \sim\mathcal{U}(0.15,0.45)   &\text{for }\ \ x<1,\\
0.7 &\text{for }\ \ x> 1,
\end{cases}
&v(x,0,\xi)&=\begin{cases}
0.7    &\text{for }\ \ x<1,\\
0.3 &\text{for }\ \ x> 1,
\end{cases} \\
\rho(x,0,\xi)&=\begin{cases}
\rho_\ell(\xi) \sim \mathcal{U}(0.55,0.85)     &\text{for }\ \ x<1,\\
0.3 &\text{for }\ \ x> 1,
\end{cases}
&v(x,0,\xi)&=\begin{cases}
0.3    &\text{for }\ \ x<1,\\
0.7 &\text{for }\ \ x> 1. 
\end{cases}
\tag{rarefaction}
\end{align}

{
\subsection{Homogeneous case}\label{sec:num_hype}
This section illustrates the hyperbolic character of the derived stochastic Galerkin formulation, in particular the statement of Theorem~\ref{LEMMA_1}. 
Figure~\ref{plot1} and~\ref{plot2} illustrate the solution to the stochastic Galerkin formulation to the Haar basis with level~$J$. The mean of the density is given by the mode~$\hat{\rho}_0(t,x)$ and plotted as blue line. The confidence region to the truncated gPC expansion is black shaded. Furthermore, the Monte-Carlo confidence region is shown as black dotted line and the reference mean as green dashed line. We observe from Figure~\ref{plot1} for the rarefaction wave that the confidence region is already well captured for level $J=0$ and the mean for~$J=3$. 

Likewise, Figure~\ref{plot2} shows the approximation for the shock case, when each realization admits a discontinuity. The mean, however, is smooth as an average of discontinuous functions. The stochastic Galerkin formulation approximates the mean as step functions (blue line). 
This behaviour is typical and has been observed also for continuous input distributions~\cite{H0,Pettersson2009,FettesPaper,GersterHertyCicip2020}.

\begin{figure}[h!] 
	\scalebox{1}{\includegraphics[width=\linewidth]{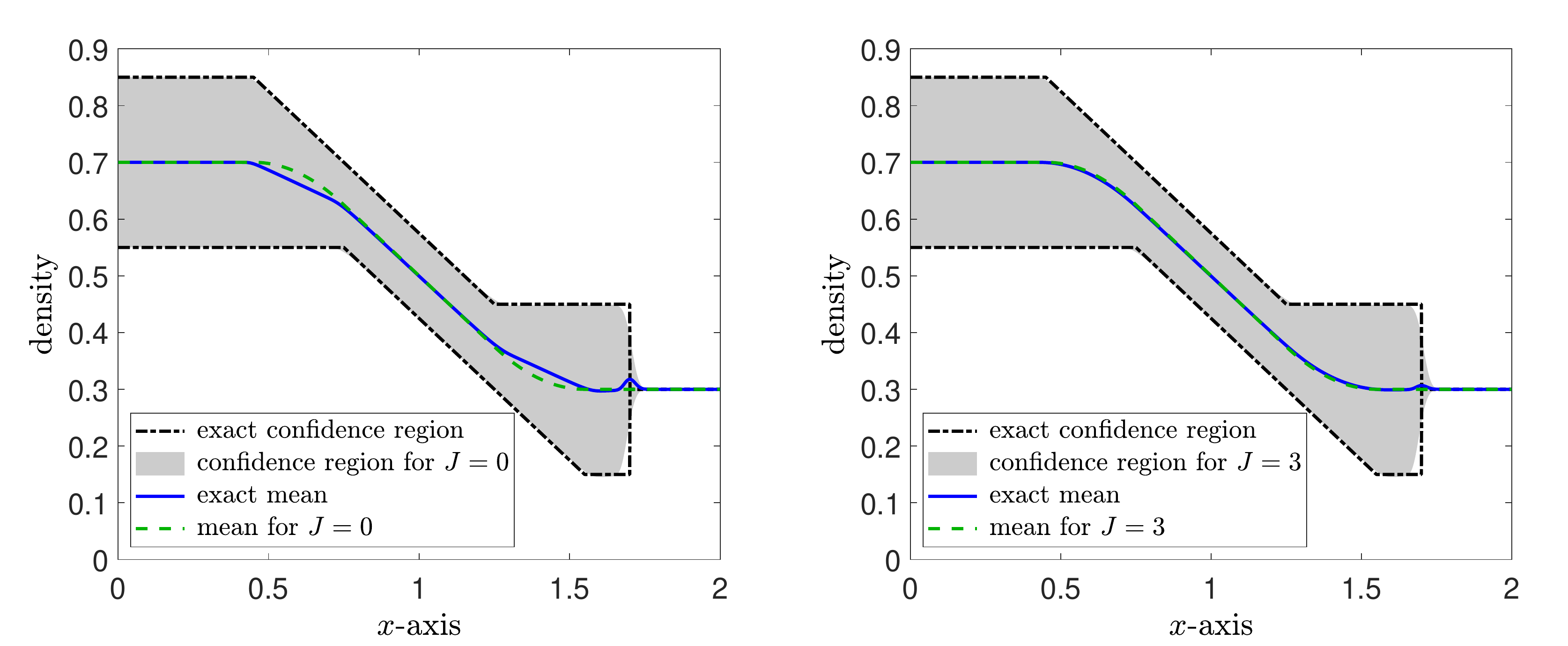}}	
	\caption{Solution to the rarefaction wave at time $t=1$ with discretization~$\Delta x = 0.001$, $\textup{CFL}=0.45$ and Monte-Carlo reference solution with~$M=10^6$ samples. }
	\label{plot1}
\end{figure}

\begin{figure}[h!] 
	\scalebox{1}{\includegraphics[width=\linewidth]{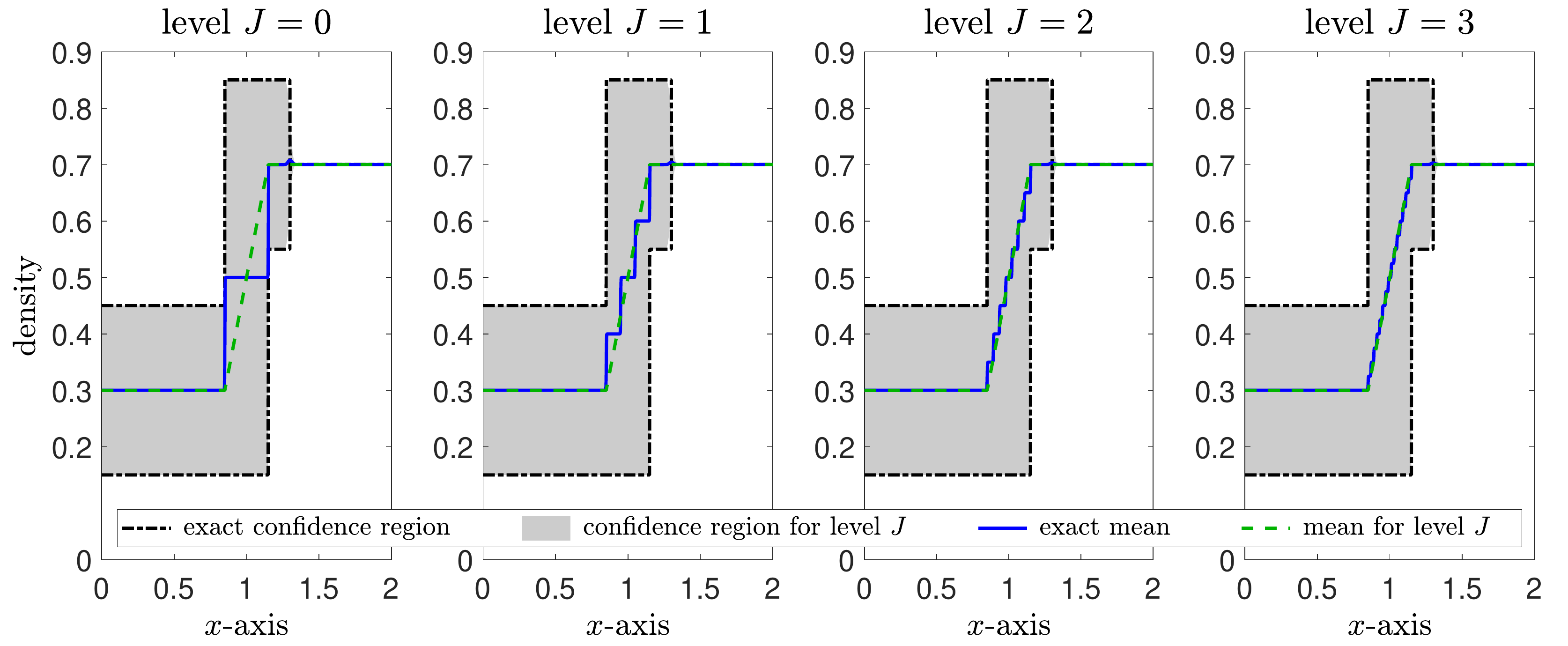}}	
	\caption{Solution to the shock wave at $t=1$ with discretization~$\Delta x = 0.001$, $\textup{CFL}=0.45$ and Monte-Carlo reference solution with~$M=10^6$ samples. }
	\label{plot2}
\end{figure}

\subsection{Inhomogeneous case}

This section is devoted to the stability analysis in~Section~\ref{sec:inhomo}. We investigate the guaranteed dissipativity condition of Theorem~\ref{thm:inhomog} and Corollary~\ref{Corollary}, which presume a relaxation to a first-order model.  
Figure~\ref{plot3} and~\ref{plot4} show  the  behaviour of the inhomogeneous ARZ model for various relaxation parameter, including the limit~$\tau=0$. 
The left panels show the results for the level $J=2$ without relaxation and the exact confidence regions are plotted in the remaining panels for comparison. Indeed, we observe a convergence towards the LWR model according to Corollary~\ref{Corollary}.

\begin{figure}[h!] 
	\scalebox{1}{\includegraphics[width=\linewidth]{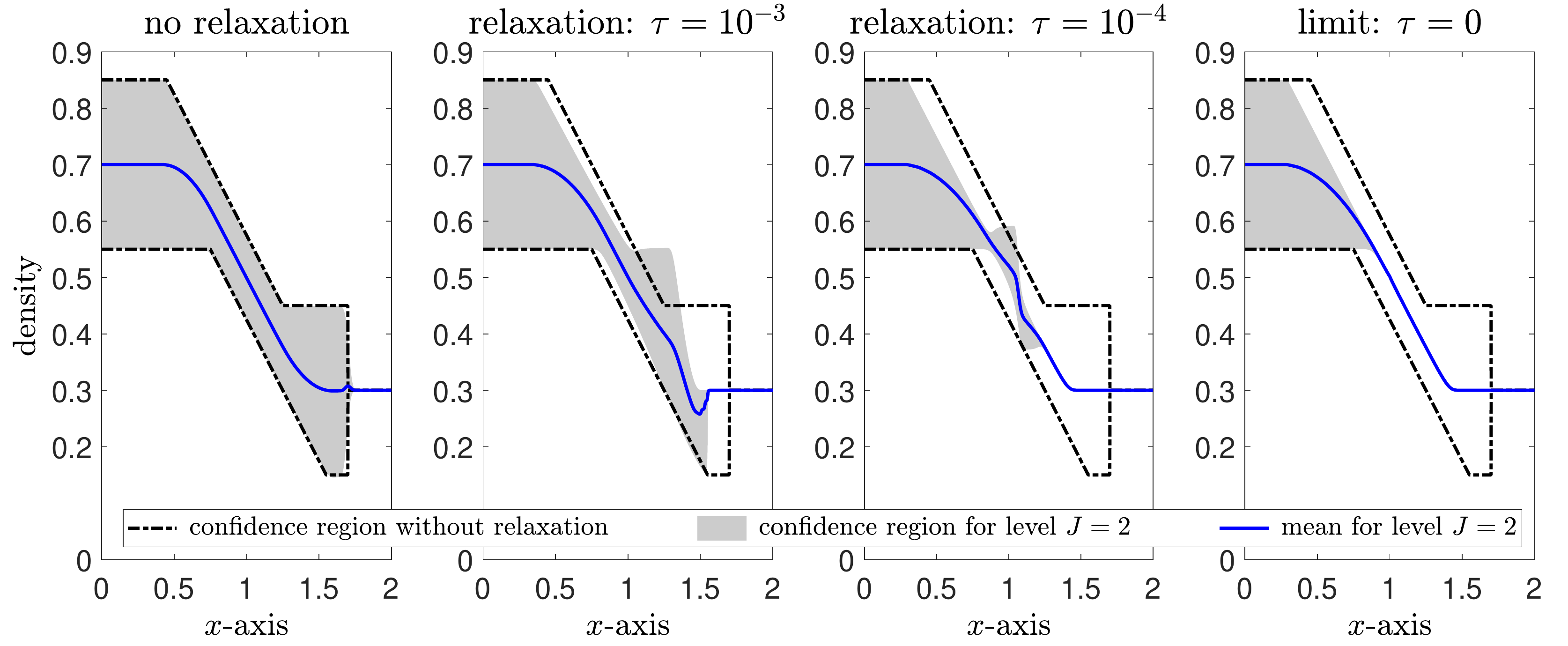}}	
	\caption{Solution to the inhomogeneous ARZ model for the rarefaction case.}
	\label{plot3}
\end{figure}

\begin{figure}[h!] 
	\scalebox{1}{\includegraphics[width=\linewidth]{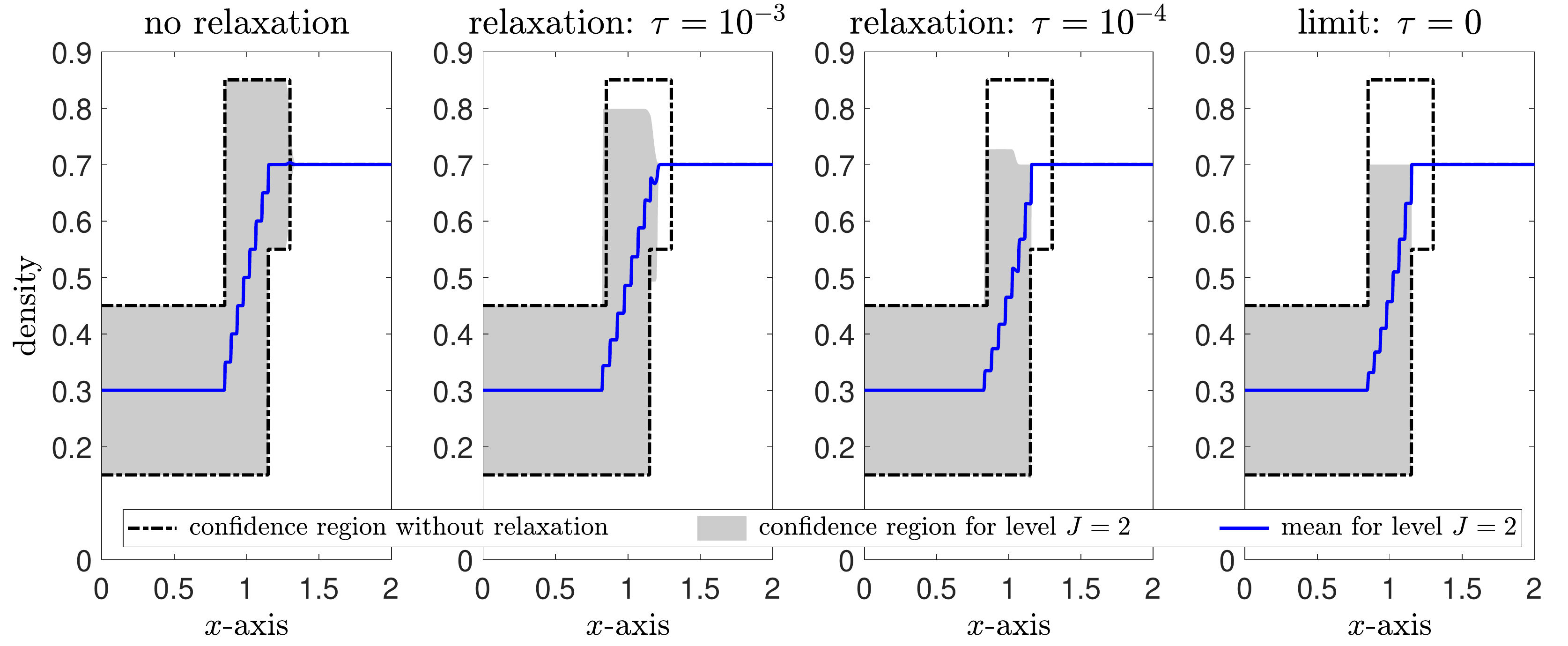}}	
	\caption{Solution to the inhomogeneous ARZ model for the shock case.}
	\label{plot4}
\end{figure}

}
\section{Summary}\label{sec:conclusion}

A stochastic Galerkin formulation of the Aw-Rascle-Zhang (ARZ) model has been presented. In particular, hyperbolicity has been shown for a special class of wavelet-based expansions. The analysis is based on a \emph{non-conservative formulation}. This  allows a stability analysis for the inhomogeneous ARZ with stiff relaxation, when solutions are expected to be close to an equilibrium velocity that satisfies a scalar conservation law. 
{ Due to the non-conservative formulation, the derived theoretical results hold \emph{only for smooth solutions}. 
However, a relationship to a conservative form has been established.
This allows for a numerical discretization with an IMEX scheme.}


\section*{Acknowledgments}
\noindent
This research is funded by the Deutsche Forschungsgemeinschaft (DFG,
German Research Foundation) under Germany's Excellence Strategy –
EXC-2023 Internet of Production – 390621612 and by
DFG~HE5386/18,19, DFG~320021702/GRK2326. \vspace{2mm}

\noindent
Furthermore, we would like to offer special thanks to Giuseppe Visconti.

\bibliography{mybib}

\begin{thebibliography}{10}

\bibitem{SI1}
R.~Abgrall, P.~Congedo, G.~Geraci, and G.~Iaccarino.
\newblock An adaptive multiresolution semi-intrusive scheme for {UQ} in
  compressible fluid problems.
\newblock {\em International Journal for Numerical Methods in Fluids},
  78:595--637, 2015.

\bibitem{aw2000SIAP}
A.~Aw and M.~Rascle.
\newblock Resurrection of ``second order'' models of traffic flow?
\newblock {\em SIAM J. Appl. Math.}, 60:916--938, 2000.

\bibitem{BRESSAN}
A.~Bressan.
\newblock {\em Hyperbolic systems of conservation laws: The one dimensional
  {C}auchy problem}.
\newblock Oxford {L}ecture {S}eries in {M}athematics and its {A}pplications.
  Oxford University Press, New York, 2005.

\bibitem{S2}
R.~H. Cameron and W.~T. Martin.
\newblock The orthogonal development of non-linear functionals in series of
  {F}ourier-{H}ermite functionals.
\newblock {\em Annals of Mathematics}, 48(2):385--392, 1947.

\bibitem{Chen1994}
G.~Q. Chen, C.~D. Levermore, and T.-P. Liu.
\newblock Hyperbolic conservation laws with stiff relaxation terms and entropy.
\newblock {\em Communications on Pure and Applied Mathematics}, 47(6):787--830,
  1994.

\bibitem{dai2020hyperbolicity}
D.~Dai, Y.~Epshteyn, and A.~Narayan.
\newblock Hyperbolicity-preserving and well-balanced stochastic {G}alerkin
  method for shallow water equations.

\bibitem{S15}
B.~J. Debusschere, H.~N. Najm, P.~P. P\'ebay, O.~M. Knio, R.~G. Ghanem, and
  O.~P.~L. Ma\^itre.
\newblock Numerical challenges in the use of polynomial chaos representations
  for stochastic processes.
\newblock {\em {SIAM} Journal on Scientific Computing}, 26(2):698--719, 2004.

\bibitem{H0}
B.~Despr\'es, G.~Po\"ette, and D.~Lucor.
\newblock Uncertainty quantification for systems of conservation laws.
\newblock {\em Journal of Computational Physics}, 228:2443--2467, 2009.

\bibitem{Ullmann2012}
O.~G. Ernst, A.~Mugler, H.~J. Starkloff, and E.~Ullmann.
\newblock On the convergence of generalized polynomial chaos expansions.
\newblock {\em ESAIM: M2AN}, 46(2):317--339, 2012.

\bibitem{fan2013comparative}
S.~Fan, M.~Herty, and B.~Seibold.
\newblock Comparative model accuracy of a data-fitted generalized
  {A}w-{R}ascle-{Z}hang model.
\newblock {\em Networks and Heterogeneous Media}, 9:239--268, 10 2013.

\bibitem{Rosales2009}
M.~R. Flynn, A.~R. Kasimov, J.-C. Nave, R.~R. Rosales, and B.~Seibold.
\newblock Self-sustained nonlinear waves in traffic flow.
\newblock {\em Physical Review E}, 79:056113, 2009.

\bibitem{funaro2008}
D.~Funaro.
\newblock {\em Polynomial approximation of differential equations}, volume~8.
\newblock Springer Science \& Business Media, 2008.

\bibitem{StephansDiss}
S.~Gerster.
\newblock {\em {S}tabilization and uncertainty quantification for systems of
  hyperbolic balance laws}.
\newblock Dissertation, RWTH Aachen University, Aachen, 2020.

\bibitem{GersterHertyCicip2020}
S.~Gerster and M.~Herty.
\newblock Entropies and symmetrization of hyperbolic stochastic {G}alerkin
  formulations.
\newblock {\em Communications in Computational Physics}, 27:639--671, 2020.

\bibitem{FettesPaper}
S.~Gerster, M.~Herty, and A.~Sikstel.
\newblock Hyperbolic stochastic {G}alerkin formulation for the $p$-system.
\newblock {\em Journal of Computational Physics}, 395:186--204, 2019.

\bibitem{S21}
R.~G. Ghanem and P.~D. Spanos.
\newblock {\em Stochastic finite elements: {A} Spectral Approach}.
\newblock Springer, New York, 1 edition, 1991.

\bibitem{Gottlieb2001}
D.~Gottlieb and J.~S. Hesthaven.
\newblock Spectral methods for hyperbolic problems.
\newblock {\em Journal of Computational and Applied Mathematics},
  128(1):83--131, 2001.

\bibitem{Greenberg2004}
J.~Greenberg.
\newblock Congestion redux.
\newblock {\em SIAM Journal on Applied Mathematics}, 64:1175--1185, 2004.

\bibitem{Greemberg2002}
J.~M. Greenberg.
\newblock Extensions and amplifications of a traffic model of {A}w and
  {R}ascle.
\newblock {\em SIAM Journal on Applied Mathematics}, 62(3):729--745, 2002.

\bibitem{greenshields_study_1935}
B.~D. Greenshields.
\newblock A study of traffic capacity.
\newblock In {\em Proceedings of the highway research board}, volume~14, pages
  448--477, 1935.

\bibitem{Kreiss2013}
B.~Gustafsson, H.-O. Kreiss, and J.~Oliger.
\newblock {\em Time-dependent problems and difference methods}.
\newblock Wiley, 2 edition, 2013.

\bibitem{Haar1910}
A.~Haar.
\newblock Zur {T}heorie der orthogonalen {F}unktionensysteme.
\newblock {\em Mathematische Annalen}, 69:331--371, 1910.

\bibitem{herty2020bgk}
M.~Herty, G.~Puppo, S.~Roncoroni, and G.~Visconti.
\newblock The {BGK} approximation of kinetic models for traffic.
\newblock {\em Kinetic \& Related Models}, 13(2):279, 2020.

\bibitem{Guiseppe2021}
M.~Herty, G.~Puppo, and G.~Visconti.
\newblock From kinetic to macroscopic models and back.
\newblock {\em SEMA SIMAI Springer Series}, pages 1--14, 2021.

\bibitem{Tosin2018}
M.~Herty, A.~Tosin, G.~Visconti, and M.~Zanella.
\newblock Hybrid stochastic kinetic description of two-dimensional traffic
  dynamics.
\newblock {\em SIAM Journal on Applied Mathematics}, 78(5):2737--2762, 2018.

\bibitem{Mattia2020}
M.~Herty, A.~Tosin, G.~Visconti, and M.~Zanella.
\newblock Reconstruction of traffic speed distributions from kinetic models
  with uncertainties.
\newblock {\em SEMA SIMAI Springer Series, 2020}, 2019.

\bibitem{JinShu2018}
S.~Jin and R.~Shu.
\newblock A study of hyperbolicity of kinetic stochastic {G}alerkin system for
  the isentropic {E}uler equations with uncertainty.
\newblock {\em Chinese Annals of Mathematics, Series B}, 40:765--780, 2019.

\bibitem{Jin1995}
S.~Jin and Z.~Xin.
\newblock The relaxation schemes for systems of conservation laws in arbitrary
  space dimensions.
\newblock {\em Communications on Pure and Applied Mathematics}, 48:235--276,
  1995.

\bibitem{H5}
S.~Jin, D.~Xiu, and X.~Zhu.
\newblock A well-balanced stochastic {G}alerkin method for scalar hyperbolic
  balance laws with random inputs.
\newblock {\em Journal of Scientific Computing}, 67:1198--1218, 2016.

\bibitem{Kermack1991}
W.~O. Kermack and A.~G. McKendrick.
\newblock Contributions to the mathematical theory of epidemics--{I}.
\newblock {\em Bulletin of Mathematical Biology}, 53(1):33--55, 1991.

\bibitem{Kroeker2015}
I.~Kr{\"o}ker, W.~Nowak, and C.~Rohde.
\newblock A stochastically and spatially adaptive parallel scheme for uncertain
  and nonlinear two-phase flow problems.
\newblock {\em Computational Geosciences}, 19(2):269--284, 2015.

\bibitem{kusch2019maximum}
J.~Kusch, G.~W. Alldredge, and M.~Frank.
\newblock Maximum-principle-satisfying second-order intrusive polynomial moment
  scheme.
\newblock {\em The SMAI journal of computational mathematics}, 5:23--51, 2019.

\bibitem{kusch2020filtered}
J.~Kusch, R.~G. McClarren, and M.~Frank.
\newblock Filtered stochastic galerkin methods for hyperbolic equations.
\newblock {\em Journal of Computational Physics}, 403:109073, 2020.

\bibitem{kusch2020intrusive}
J.~Kusch, J.~Wolters, and M.~Frank.
\newblock Intrusive acceleration strategies for uncertainty quantification for
  hyperbolic systems of conservation laws.
\newblock {\em Journal of Computational Physics}, 419:109698, 2020.

\bibitem{Leveque2}
R.~J. Leveque.
\newblock {\em Numerical Methods for Conservation Laws}.
\newblock Lectures in Mathematics. ETH Z\"urich. Birkh\"auser Basel, 2 edition,
  1992.

\bibitem{LWR}
M.~J. Lighthill and G.~B. Whitham.
\newblock On kinematic waves {II}. {A} theory of traffic flow on long crowded
  roads.
\newblock {\em Proceedings of the Royal Society of London A: Mathematical,
  Physical and Engineering Sciences}, 229(1178):317--345, 1955.

\bibitem{S4}
O.~P.~L. Ma\^itre and O.~M. Knio.
\newblock {\em Spectral Methods for uncertainty quantification}.
\newblock Springer Netherlands, 1 edition, 2010.

\bibitem{LEMAITRE2004}
O.~P.~L. Ma\^itre, O.~M. Knio, H.~N. Najm, and R.~G. Ghanem.
\newblock Uncertainty propagation using {W}iener-{H}aar expansions.
\newblock {\em Journal of Computational Physics}, 197(1):28--57, 2004.

\bibitem{Noble1974}
J.~Noble.
\newblock Geographic and temporal development of plagues.
\newblock {\em Nature}, 250:726--729, 1974.

\bibitem{pareschi2005implicit}
L.~Pareschi and G.~Russo.
\newblock Implicit--explicit {R}unge-{K}utta schemes and applications to
  hyperbolic systems with relaxation.
\newblock {\em Journal of Scientific computing}, 25(1):129--155, 2005.

\bibitem{Pettersson2009}
P.~Pettersson, G.~Iaccarino, and J.~Nordstr\"om.
\newblock Numerical analysis of the {B}urgers' equation in the presence of
  uncertainty.
\newblock {\em Journal of Computational Physics}, 228(22):8394--8412, 2009.

\bibitem{S5}
P.~Pettersson, G.~Iaccarino, and J.~Nordstr\"om.
\newblock A stochastic {G}alerkin method for the {E}uler equations with {R}oe
  variable transformation.
\newblock {\em Journal of Computational Physics}, 257:481--500, 2014.

\bibitem{S16}
P.~Pettersson, G.~Iaccarino, and J.~Nordstr\"om.
\newblock {\em Polynomial chaos methods for hyperbolic partial differential
  equations}.
\newblock Springer International Publishing, Switzerland, 2015.

\bibitem{Pieraccini_Puppo2006}
S.~Pieraccini and G.~Puppo.
\newblock {Implicit–explicit Schemes for {BGK} kinetic equations}.
\newblock {\em Journal of Scientific Computing}, 32:1--28, 2007.

\bibitem{Guiseppe2016}
G.~Puppo, M.~Semplice, and A.~T.~G. Visconti.
\newblock Fundamental diagrams in traffic flow: {T}he case of heterogeneous
  kinetic models.
\newblock {\em Communications in mathematical sciences}, 14:643--669, 2016.

\bibitem{richards1956OR}
P.~I. Richards.
\newblock Shock waves on the highway.
\newblock {\em Operations Research}, 4(1):42--51, 1956.

\bibitem{Seibold2013}
B.~Seibold, M.~R. Flynn, A.~R. Kasimov, and R.~R. Rosales.
\newblock Constructing set-valued fundamental diagrams from {J}amiton solutions
  in second order traffic models.
\newblock {\em Networks \& Heterogeneous Media}, 8:745--772, 2013.

\bibitem{Siebel2005}
F.~Siebel and W.~Mauser.
\newblock On the fundamental diagram of traffic flow.
\newblock {\em SIAM Journal on Applied Mathematics}, 66:1150--1162, 2005.

\bibitem{Sonday2011}
B.~Sonday, R.~Berry, H.~Najm, and B.~Debusschere.
\newblock Eigenvalues of the {J}acobian of a {G}alerkin-projected uncertain
  {ODE} system.
\newblock {\em Journal of Scientific Computing}, 33:1212--1233, 2011.

\bibitem{S18}
T.~J. Sullivan.
\newblock {\em Introduction to uncertainty quantification}.
\newblock Texts in Applied Mathematics. Springer, Switzerland, 1 edition, 2015.

\bibitem{Tosin2021}
A.~Tosin and M.~Zanella.
\newblock Uncertainty damping in kinetic traffic models by driver-assist
  controls.
\newblock {\em Mathematical Control \& Related Fields}, 78(5):2737--2762, 2021.

\bibitem{Tryoen2012}
J.~Tryoen, O.~P.~L. Ma\^itre, O.~M. Knio, and A.~Ern.
\newblock Adaptive anisotropic spectral stochastic methods for uncertain scalar
  conservation laws.
\newblock {\em SIAM Journal on Scientific Computing}, 34(5):2459--2481, 2012.

\bibitem{whitham1974linear}
G.~B. Whitham.
\newblock Linear and nonlinear waves.
\newblock {\em Wiley-Interscience}, pages 431--484, 1974.

\bibitem{S1}
N.~Wiener.
\newblock The homogeneous chaos.
\newblock {\em American Journal of Mathematics}, 60(4):897--936, 1938.

\bibitem{H8}
K.~Wu, H.~Tang, and D.~Xiu.
\newblock A stochastic {G}alerkin method for first-order quasilinear hyperbolic
  systems with uncertainty.
\newblock {\em Journal of Computational Physics}, 345:224--244, 2017.

\bibitem{S3}
D.~Xiu and G.~E. Karniadakis.
\newblock The {W}iener-{A}skey polynomial chaos for stochastic differential
  equations.
\newblock {\em {SIAM} Journal on Scientific Computing}, 24:619--644, 2002.

\bibitem{zhang2002non}
H.~M. Zhang.
\newblock A non-equilibrium traffic model devoid of gas-like behavior.
\newblock {\em Transportation Research Part B: Methodological}, 36(3):275--290,
  2002.

\end{thebibliography}
\bibliographystyle{abbrv}

\end{document}